\documentclass[leqno,11pt]{amsart}
\usepackage{amssymb, amsmath,amsmath,latexsym,amssymb,amsfonts,amsbsy, amsthm,esint}
\usepackage{color}
\usepackage{amssymb,amsmath,amsthm,amsfonts,amsbsy,mathrsfs,dsfont}
\usepackage{paralist}
\usepackage[colorlinks=true]{hyperref}
\usepackage[pagewise]{lineno}
\hypersetup{urlcolor=blue, citecolor=red}

  \textheight=8.2 true in
   \textwidth=5.0 true in
    \topmargin 30pt
     \setcounter{page}{1}


\newtheorem{theorem}{Theorem}[section]

\newtheorem{lemma}[theorem]{Lemma}
\newtheorem{proposition}{Proposition}[section]

\theoremstyle{definition}
\newtheorem{definition}[theorem]{Definition}
\newtheorem{remark}{Remark}

\theoremstyle{remark}
\newtheorem{case}{Case}
\newtheorem{step}{Step}

\makeatletter 
\@addtoreset{case}{lemma}
\@addtoreset{case}{theorem}
\@addtoreset{case}{proposition}
\@addtoreset{case}{corollary}
\makeatother 

\title[Fractional Camassa-Holm Equations] 
      {On the Viscous Camassa-Holm Equations with Fractional Diffusion}

\author[Zaihui Gan, Fanghua Lin and Jiajun Tong]{}

\subjclass{Primary: 35A01; 35Q35; Secondary: 35G25; 35K30.}
 \keywords{Viscous Camassa-Holm equations; Lagrangian averaged Navier-Stokes equations; fractional diffusion; global well-posedness; improved regularity.}

 \email{ganzaihui2008cn@tju.edu.cn}
 \email{linf@cims.nyu.edu}
 \email{jiajun.tong@nyu.edu}

\thanks{$^*$ Corresponding author: Jiajun Tong}

\begin{document}
\maketitle

\centerline{\scshape Zaihui Gan}
\medskip
{\footnotesize
 \centerline{Center for Applied Mathematics, Tianjin University}
   \centerline{Tianjin 300072, China}
} 

\medskip

\centerline{\scshape Fanghua Lin and Jiajun Tong$^*$}
\medskip
{\footnotesize
 \centerline{Courant Institute, New York University}
   \centerline{251 Mercer Street}
   \centerline{New York, NY 10012, USA}
}



\begin{abstract}
We study Cauchy problem of a class of viscous Camassa-Holm equations (or Lagrangian averaged Navier-Stokes
equations) with fractional diffusion in both smooth bounded domains and in the whole space
in two and three dimensions. Order of the fractional diffusion is assumed to be $2s$
with $s\in [n/4,1)$, which seems to be sharp for the validity of the main results of
the paper; here $n=2,3$ is the dimension of space.
We prove global
well-posedness in $C_{[0,+\infty)}(D(A))\cap L^2_{[0,+\infty),loc}(D(A^{1+s/2}))$ whenever the initial data $u_0\in D(A)$, where $A$ is the Stokes operator.
We also prove that such global solutions gain regularity instantaneously after the
initial time.
A bound on a higher-order spatial norm is also obtained.
\end{abstract}

\section{Introduction}\label{sec: introduction}

Hydrodynamic equations with nonlocal effects 
have attracted a great attention in recent years.
While some of the problems are described by nonlocal equations to begin with, many
others, especially those concerning interface motion in fluids, are often derived from local equations.
See for examples \cite{caffarelli2010drift, constantin2017local,
ccg2011, ccf2005, constantin2016cgrs, DeLellis2017,
lin2019solvability} and references therein.
As an important type of nonlocality, fractional diffusion arises naturally in many hydrodynamic problems, characterizing 
nonlocal drift or diffusion \cite{caffarelli2010drift,
constantin2017local, ccf2005, DeLellis2017}, or capturing certain thermal and electromagnetic effects \cite{constantin1994formation,constantin2017some}.
From an analytic point of view, evolution problems with these nonlocal features are of great interest on their own. 

In this paper, we shall study viscous Camassa-Holm equations with fractional diffusion
in $\Omega \subset\mathbb{R}^n$ $(n = 2,3)$.
Throughout the paper, unless otherwise stated, we shall always assume that
\begin{equation}
\Omega\subset\mathbb{R}^n \mbox{ is a smooth bounded domain, or } \Omega = \mathbb{R}^n,\mbox{ with }n = 2,3,
\label{eqn: assumption on the regularity of the domain}
\end{equation}
and
\begin{equation}
s\in[n/4,1).
\label{eqn: assumption on s}
\end{equation}
The equations are as follows:
\begin{align}
&\;\partial_t (1-\alpha^2\Delta)u + u\cdot \nabla (1-\alpha^2\Delta)u - \alpha^2\nabla u^T\cdot \Delta u +\nabla p  = -\nu (1-\alpha^2\Delta)A^s u,\label{eqn: LANS fractional equation}\\
&\; \mathrm{div}\,u = 0,\quad u|_{t = 0} = u_0\label{eqn: div free condition and initial data of LANS fractional}.
\end{align}
Here $u$ denotes a divergence-free fluid velocity field.
The constant $\alpha>0$ characterizes the scale at which fluid motion is averaged, and $\nu>0$ is the viscosity.
$A = \mathcal{P}(-\Delta)$ is the Stokes operator, with $\mathcal{P}$ being the Leray projection operator $\mathcal{P}:L^2(\Omega)\rightarrow \{v\in L^2(\Omega):\mathrm{div}\, v = 0,\,v\cdot n = 0\mbox{ on }\partial\Omega\}$; we always omit the $\Omega$-dependence of $A$ and $\mathcal{P}$.
$A^s$ with $s \in [\frac{n}{4},1)$ is the spectral fractional Stokes operator which will be defined
below, and which is a nonlocal operator in nature.
There are alternative (but not
necessarily equivalent) ways of defining fractional Stokes operators,
but we find the spectral fractional Stokes operator is the
easiest to work with for our purpose.
The range of $s$ is seen to be sharp from
the viewpoint of the energy method (see the proof of Theorem \ref{thm: global existence
and uniqueness} below).
Initial data for $u$ is specified.
When $\Omega$ is a smooth bounded domain, we additionally need
boundary conditions
\begin{equation}
u = A^s u = 0\quad \mbox{ on }\partial\Omega.\label{eqn: boundary conditions of LANS fractional}
\end{equation}

When $s= 1$, equations \eqref{eqn: LANS fractional equation}-\eqref{eqn: div free condition and
initial data of LANS fractional} are often referred as the classic viscous Camassa-Holm equations, or equivalently the isotropic Lagrangian averaged Navier-Stokes equations (LANS-$\alpha$) \cite{marsden2001global}. The inviscid version of the LANS-$\alpha$ equations, or the
Lagrangian averaged Euler (LAE-$\alpha$) equations, were first derived in
\cite{holm1998euler,holm1998euler_semidirect} from a variational formulation, motivated by the fact that the Camassa-Holm equation in one dimension describes geodesic motion on certain diffeomorphism group.
An alternative derivation can be found in \cite{holm1999fluctuation}.
Viscosities were later added to the LAE-$\alpha$ equations, giving rise to the LANS-$\alpha$
equations \cite{chen1998camassa,chen1999direct,chen1999camassa}.
Its relation to the turbulence theory has been well investigated \cite{foias2001navier,foias2002three,holm2002karman,mohseni2003numerical,cheskidov2004boundary}.
Both LAE-$\alpha$ and LANS-$\alpha$ equations can be viewed as closure models when motion at the scales smaller than $\alpha$ is averaged out.
Anisotropic generalizations of the LAE-$\alpha$ and LANS-$\alpha$ equations in bounded domains are presented in \cite{marsden2003anisotropic}, which takes into account that the covariance tensor of the Lagrangian fluctuation field is not constantly identity matrix throughout the domain and it should evolve with the flow.
For a more comprehensive history of the LANS-$\alpha$ equation, we refer the readers to \cite{marsden2001global} and the references therein.
As for results in analysis, a handful of global existence or well-posedness results of the
LANS-$\alpha$ equation have been established in periodic boxes \cite{foias2002three}, in bounded
domains and the whole space \cite{coutand2002global, bjorland2008questions, bjorland2008decay},
and on Riemannian manifolds with boundaries \cite{shkoller2000analysis}; decay of solutions
in bounded domains and the whole spaces was also investigated in \cite{bjorland2008questions,
bjorland2008decay}.

Although it is not obvious how fractional diffusion can be physically incorporated into derivations of
the Camassa-Holm equations, the specific form of the fractional dissipation in \eqref{eqn:
LANS fractional equation} together with the boundary conditions \eqref{eqn: boundary
conditions of LANS fractional} is quite natural from analysis point of view; a
similar choice is made in \cite{marsden2001global}.
For simplicity, we only focus on the isotropic fractional
LANS-$\alpha$ equations, i.e., the viscous Camassa-Holm equations, although it was
suggested that the anisotropic LANS-$\alpha$ equation may be more relavent for bounded
domains \cite{marsden2003anisotropic}.

Our first result, Theorem \ref{thm: global existence and uniqueness}, is the global well-posedness with sharp fractional power
$s$. It may be viewed as a fractional counterpart of classical results by Kieslev-Ladyzenskaya
and others for the Navier-Stokes equations \cite{kiselev1957existence,temam1984navier}.
It would also be interesting if one can build rather weak solutions as in
\cite{DeLellis2017} for suitable small positive powers $s$. Next, we show that the global
solution gains regularity when $t>0$, which is stated in Theorem \ref{thm: improved regularity in non-critical case} and
Theorem \ref{thm: improved regularity in critical case}. The latter, characterizing the critical case $(n,s) = (2,1/2)$, is in general not easy to
establish, and it may be a starting point for a further regularity theory. Here instead of
dealing with commutators associated with nonlocal operators on a bounded domain which could be rather technical, we make use of the fractional semigroups
to derive desired estimates.
One might need nonlocal commutator estimates when studying higher regularity and boundary regularity.
These related issues will be addressed elsewhere.

The rest of the paper is organized as follows. In Section \ref{section: preliminaries}, we
introduce the spectral fractional Stokes operator and present an equivalent formulation of
the equations \eqref{eqn: LANS fractional equation}-\eqref{eqn: div free condition and
initial data of LANS fractional}. Section \ref{section: global wellposedness} will be
devoted to proving Theorem \ref{thm: global existence and uniqueness} on
the global well-posedness result. 
In Section \ref{section: improved regularity}, we prove that the
global solution enjoys higher spatial regularity for any positive time.
The main results are summarized in Theorem \ref{thm: improved regularity in non-critical case} for the non-critical case, and in
Theorem \ref{thm: improved regularity in critical case} for the critical case, respectively.

\section{Preliminaries}\label{section: preliminaries}
We first introduce some notations.
Let $\Sigma =\{\phi \in C_0^\infty(\Omega):\nabla\cdot \phi=0\}$.
As in much literature on mathematical hydrodynamics,
let $V$ denote the $H^1$-completion of $\Sigma$; while the $L^2$-completion of $\Sigma$ is denoted by $H$. 
Define $V^r = H^r(\Omega)\cap V$ for all $r\geq 1$; obviously $V = V^1$.

In the case of $\Omega = \mathbb{R}^n$, $(-\Delta)$ and $\mathcal{P}$ are both Fourier multipliers, and thus they commute.
Indeed, for all $f \in\mathscr{S}(\mathbb{R}^n)$, $\widehat{Af}(\xi) = \hat{\mathcal{P}}(\xi)|\xi|^2 \hat{f}(\xi)$, where
\begin{equation*}
\hat{f}(\xi) = \frac{1}{(2\pi)^{n/2}}\int_{\mathbb{R}^n} f(x)e^{-i x\cdot \xi}\,dx
\end{equation*}
is the Fourier transform of $f$, and where $\hat{\mathcal{P}}(\xi)$ is the Fourier multiplier associated with $\mathcal{P}$.
Hence, $A^s$ can be naturally defined by
\begin{equation*}
\widehat{A^s f}(\xi) = \hat{\mathcal{P}}(\xi)|\xi|^{2s} \hat{f}(\xi).
\end{equation*}
Define
\begin{equation*}
\|f\|_{D(A^r)(\mathbb{R}^n)} := (\|A^r f\|_{L^2(\mathbb{R}^n)}^2+\|f\|_{L^2(\mathbb{R}^n)}^2\mathds{1}_{\{r>0\}})^{1/2}.
\end{equation*}

Now consider $\Omega\subset \mathbb{R}^{n}$ $(n=2,3)$ to be a smooth bounded domain.
For the stationary Stokes equation in $\Omega$ with zero Dirichlet boundary condition, there exists a sequence of eigenvalues $\{\mu_j\}_{j\in\mathbb{Z}_+}\subset \mathbb{R}_+$ and a sequence of eigenfunctions $\{w_j\}_{j\in \mathbb{Z}_+}\subset L^2(\Omega)$, both depending on $\Omega$, solving
\begin{equation*}
A w_j=\mu_j w_j\mbox{ in }\Omega,\quad \mathrm{div}\, w_j = 0,\quad w_j|_{\partial\Omega}=0,
\end{equation*}
such that $\{\mu_j\}_{j\in\mathbb{Z}_+}$ is non-decreasing in $\mathbb{R}_+$ and  $\{w_j\}_{j\in \mathbb{Z}_+}$ forms an orthonormal basis of $H$. 
It is known that $w_j\in C^\infty(\Omega)\cap V$ \cite{temam1984navier}.
For all $f\in H$, it has a spectral decomposition
\begin{equation*}
f(x)=\sum_{j=1}^\infty f_j w_j(x),\quad f_j=\int_{\Omega}f(x) w_j(x)\,dx.
\end{equation*}
The infinite sum is understood in the $L^2$-sense.
In fact, $\|f\|_{L^2(\Omega)} = \|\{f_j\}_{j\in\mathbb{Z}_+}\|_{l^2}$.
For all $r\in\mathbb{R}$, define
\begin{equation*}
D(A^r)(\Omega) = \left\{f(x)=\sum_{j=1}^\infty f_j w_j(x): \{\mu_j^rf_j\}_{j\in\mathbb{Z}_+}\in l^2\right\},
\end{equation*}
with
\begin{equation*}
\|f\|_{D(A^r)(\Omega)} := (\|\{\mu_j^r f_j\}_{j\in\mathbb{Z}_+}\|_{l^2}^2+\|\{f_j\}_{j\in \mathbb{Z}_+}\|_{l^2}^2\mathds{1}_{\{r>0\}})^{1/2}.
\end{equation*}
We shall omit the $\Omega$-dependence in $D(A^r)(\Omega)$ whenever it is convenient.
Then for all $f\in D(A^r)$,
\begin{equation*}
A^r f(x) := \sum_{j=1}^\infty \mu_j^r f_j w_j(x).
\end{equation*}
Again the infinite sum is understood in the $L^2$-sense.
As a result, $\|f\|_{D(A^r)(\Omega)} = (\|A^r f\|_{L^2(\Omega)}^2+\|f\|_{L^2(\Omega)}^2\mathds{1}_{\{r>0\}})^{1/2}$.

Note that when $\Omega$ is a smooth bounded domain, the boundary condition \eqref{eqn: boundary conditions of LANS fractional} is well-defined and it is automatically satisfied in the space $D(A^{1+s/2})$.
Indeed, we have the following lemma.
\begin{lemma}\label{lemma: boundary data of D(A^r) functions}
Suppose $\Omega\subset\mathbb{R}^n$ $(n = 2,3)$ is a smooth bounded domain. Then all $D(A^r)$-functions have trace zero if $r> 1/4$.
\begin{proof}
We first show that $D(A)=V^2$.
Indeed, for all $f\in D(A)$, there is a sequence of $\{f^n\}\subset C^\infty(\Omega)\cap V$ being finite linear combinations of $w_j$, such that $f^n\to f$ in the $D(A)$-norm.
On one hand, this implies that $\{Af^n\}_{n\in\mathbb{Z}_+}$ forms a Cauchy sequence in $L^2(\Omega)$ and thus $\{f^n\}_{n\in\mathbb{Z}_+}$ is a Cauchy sequence in $V^2$ thanks to their zero boundary conditions and the regularity theory of the stationary Stokes equation 
\cite{temam1984navier,lions1972non}.
We assume $f_n\rightarrow f_*$ in $V^2$ for some $f_*\in V^2$.
On the other hand, that $f^n \to f$ in $D(A)$ implies that $f^n\rightarrow f$ in $L^2(\Omega)$.
Hence, $f = f_*\in V^2$, and $\|f\|_{H^2(\Omega)}\leq C\|Af\|_{L^2(\Omega)} \leq C\|f\|_{D(A)}$ because the same estimates holds for $f^n$.
This implies that $D(A)\hookrightarrow V^2$.
That $V^2 \hookrightarrow D(A)$ is trivial since $\|Af\|_{L^2(\Omega)}\leq C\|f\|_{H^2(\Omega)}$.

To this end, for all $f\in D(A)$, we have $f\in L^2$  with $f|_{\partial\Omega} = 0$ and $(-\Delta)f\in L^2$.
This gives
\begin{equation*}
\|f\|_{D(A^{1/2})}^2 = \sum_{j=1}^\infty (\mu_j+1) f_j^2 = \int_\Omega f(1-\Delta)f \,dx = \|f\|_{H^1(\Omega)}^2.
\end{equation*}
This is also true for all $f\in D(A^{1/2})$ since $D(A)$ is dense in $D(A^{1/2})$.
Hence, $D(A^{1/2})=V$.

Since the embeddings $i:D(A^0)\rightarrow L^2(\Omega)$ and $i: D(A^{1/2})\rightarrow H_0^1(\Omega)$ are continuous, by interpolation, for $r\in(1/4,1/2]$, $i$ is continuous from $[D(A^0), D(A^{1/2})]_{2r} = D(A^r)$ to $[L^2(\Omega), H_0^1(\Omega)]_{2r}= H_0^{2r}(\Omega)$ \cite{lions1972non}.
Functions in $H_0^{2r}(\Omega)$ all have zero trace.
For $r>1/2$, it suffices to note that $D(A^r)\hookrightarrow D(A^{1/2})$.
\end{proof}
\begin{remark}\label{rmk: relation between D(A^r) and V^2r}
When $\Omega$ is a smooth bounded domain, in fact, $D(A^r) = V^{2r}$ for all $r\in[1/2,5/4)$.
More generally, by virtue of the interpolation theory \cite{lions1972non}, $D(A^r)\hookrightarrow V^{2r}$ for all $r\geq 1/2$.
When $\Omega=\mathbb{R}^n$, $D(A^r)=V^{2r}$ for all $r\geq 1/2$.
\end{remark}
\end{lemma}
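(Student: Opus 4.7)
The plan is to identify $D(A^r)$ with familiar Sobolev-type spaces at two anchor values $r = 0$ and $r = 1/2$, and then interpolate. First I would prove the identification $D(A) = V^2$. Any $f\in D(A)$ is the $D(A)$-limit of finite spectral truncations $f^n\in C^\infty(\Omega)\cap V$. By the $H^2$-regularity theory for the stationary Stokes system with zero Dirichlet data (Temam; Lions--Magenes), $\|f^n\|_{H^2(\Omega)}\leq C\|Af^n\|_{L^2(\Omega)}$, so $\{f^n\}$ is Cauchy in $V^2$, converges there to some $f_*\in V^2$, and also converges in $L^2$ to $f$; hence $f = f_*\in V^2$ and $\|f\|_{H^2}\leq C\|f\|_{D(A)}$. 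The reverse embedding is trivial.

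Next I would settle the key case $r = 1/2$. For $f\in D(A)$, integration by parts and the zero boundary trace give
\begin{equation*}
\|f\|_{D(A^{1/2})}^2 = \sum_{j=1}^\infty (\mu_j+1)f_j^2 = \int_\Omega f\,(1-\Delta)f\,dx = \|f\|_{H^1(\Omega)}^2.
\end{equation*}
Since $D(A)$ is dense in $D(A^{1/2})$, this identity extends, yielding $D(A^{1/2}) = V$. In particular $D(A^{1/2})\hookrightarrow H_0^1(\Omega)$.

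For $r\in (1/4, 1/2]$ I would then invoke the interpolation theorem of Lions--Magenes. The continuous embeddings $D(A^0)\hookrightarrow L^2(\Omega)$ and $D(A^{1/2})\hookrightarrow H_0^1(\Omega)$ interpolate to give
\begin{equation*}
D(A^r) = [D(A^0), D(A^{1/2})]_{2r}\hookrightarrow [L^2(\Omega), H_0^1(\Omega)]_{2r} = H_0^{2r}(\Omega),
\end{equation*}
where the last identification uses $2r > 1/2$. Every function in $H_0^{2r}(\Omega)$ has a well-defined zero trace, which gives the claim in this range. For $r > 1/2$, the claim is immediate from the monotone embedding $D(A^r)\hookrightarrow D(A^{1/2}) = V$, which already forces zero trace.

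The main point to be careful about is the identification of the real/complex interpolation space $[L^2,H_0^1]_{2r}$ with $H_0^{2r}$: this is only valid when $2r > 1/2$, and for $2r \leq 1/2$ one obtains merely $H^{2r}(\Omega)$ with no trace constraint. Thus $r = 1/4$ is precisely the threshold below which the interpolation argument cannot produce a trace-zero conclusion, explaining the strict inequality in the hypothesis.
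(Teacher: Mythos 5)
Your proposal is correct and follows essentially the same route as the paper: identify $D(A)=V^2$ via Stokes regularity, establish $D(A^{1/2})=V$ by the integration-by-parts identity and density, interpolate between $D(A^0)\hookrightarrow L^2$ and $D(A^{1/2})\hookrightarrow H_0^1$ to land in $H_0^{2r}(\Omega)$ for $r\in(1/4,1/2]$, and reduce $r>1/2$ to the case $r=1/2$. Your closing remark on why $2r>1/2$ is needed for the identification $[L^2,H_0^1]_{2r}=H_0^{2r}$ is a useful clarification of the threshold, but the argument itself is the paper's.
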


Let $(1-\alpha^2\Delta)^{-1}$ be the inverse of the elliptic operator $(1-\alpha^2\Delta)$ on $\Omega$ (with zero Dirichlet boundary condition if $\Omega$ is a smooth bounded domain).
In the view of 
$A^s u = 0$ on $\partial\Omega$ if $\Omega$ is a smooth bounded domain,
it is valid to take $(1-\alpha^2\Delta)^{-1}$ on both sides of \eqref{eqn: LANS fractional equation}, and we obtain the following equivalent formulation of the Camassa-Holm equation with fractional diffusion \cite{marsden2001global}:
\begin{equation}
\partial_t u +\nu A^s u + \mathcal{P}^\alpha [u\cdot\nabla u +\mathcal{U}^\alpha(u,u)] = 0,
\label{eqn: equivalent formulation of LANS equation}
\end{equation}
where with adaptation of notations in \cite{marsden2001global},
\begin{equation}
\mathcal{U}^\alpha(u_1,u_2) = \alpha^2 (1-\alpha^2 \Delta)^{-1} \mathrm{div}\,[\nabla u_1\cdot \nabla u_2^T + \nabla u_1\cdot \nabla u_2 - \nabla u_1^T\cdot\nabla u_2],
\label{eqn: def of U}
\end{equation}
and $\mathcal{P}^\alpha: H_0^1\cap H^r(\Omega)\rightarrow V^r$ $(r\geq 1)$ is the \emph{Stokes projector} \cite{shkoller2000analysis} uniquely defined by
\begin{equation*}
\begin{split}
&\;(1-\alpha^2 \Delta)\mathcal{P}^\alpha(w) +\nabla p = (1-\alpha^2 \Delta)w,\\
&\;\mathrm{div}\,\mathcal{P}^\alpha(w) = 0,\quad\mathcal{P}^\alpha(w)|_{\partial\Omega} = 0.
\end{split}
\end{equation*}
For all $r\geq 1$, $\mathcal{P}^\alpha$ is bounded from $H_0^1\cap H^r(\Omega)$ to $V^r$ \cite{marsden2001global}.

The relation between \eqref{eqn: LANS fractional equation}-\eqref{eqn: div free condition and initial data of LANS fractional} and the viscous Camassa-Holm equation can be formally revealed as follows.
Assuming sufficient regularity of $u$, we apply $\mathcal{P}$ to \eqref{eqn: LANS fractional equation} to obtain that
\begin{equation}
\partial_t (1+\alpha^2 A)u + \mathcal{P}[u\cdot \nabla (1-\alpha^2\Delta)u - \alpha^2\nabla u^T\cdot \Delta u ] = -\nu (1+\alpha^2 A)A^s u.
\label{eqn: projected equation for u}
\end{equation}
Suppose $-\Delta u = Au + \nabla q$ for some $q$ and define
\begin{equation*}
v = (1+\alpha^2 A)u.
\end{equation*}
Then \eqref{eqn: projected equation for u} becomes
\begin{equation*}
\partial_t v + \mathcal{P}[u\cdot \nabla v+\alpha^2 u\cdot \nabla \nabla q - \alpha^2\nabla u^T\cdot \Delta u ] = -\nu A^s v.
\end{equation*}
Since
\begin{equation*}
\begin{split}
\alpha^2\mathcal{P}[ u\cdot \nabla \nabla q - \nabla u^T\cdot \Delta u ] = &\; \alpha^2\mathcal{P}[ -\nabla u^T\cdot \nabla q - \nabla u^T\cdot \Delta u ]\\
 =&\; \alpha^2\mathcal{P}[ \nabla u^T\cdot Au] = \mathcal{P}[ \nabla u^T v],
\end{split}
\end{equation*}
we obtain that
\begin{equation}
\partial_t v + u\cdot \nabla v+\nabla u^T  v +\nabla \tilde{p}= -\nu A^s v
\label{eqn: v equation}
\end{equation}
for some $\tilde{p}$.
This recovers the more commonly-used form of the Camassa-Holm equation \cite{bjorland2008questions} with fractional diffusions.

\section{Global Well-posedness}\label{section: global wellposedness}
Our main result on the global well-posedness of the equations \eqref{eqn: equivalent formulation of LANS equation} and \eqref{eqn: div free condition and initial data of LANS fractional} (or equivalently, \eqref{eqn: LANS fractional equation}-\eqref{eqn: div free condition and initial data of LANS fractional}), with boundary conditions \eqref{eqn: boundary conditions of LANS fractional} when $\Omega$ is a smooth bounded domain, is as follows.
\begin{theorem}[Global well-posedness]\label{thm: global existence and uniqueness}
Assume \eqref{eqn: assumption on the regularity of the domain} and \eqref{eqn: assumption on s}, and let $u_0 \in D(A)$.
Then there exists a unique solution $u\in C_{[0,+\infty)}(D(A))\cap L^2_{[0,+\infty),loc}(D(A^{1+s/2}))$ with $\partial_t u\in L_{[0,+\infty),loc}^2 (D(A^{1-s/2}))$ solving \eqref{eqn: equivalent formulation of LANS equation} with initial condition $u|_{t = 0} = u_0$ (and boundary conditions \eqref{eqn: boundary conditions of LANS fractional} if $\Omega$ is a smooth bounded domain).
    It satisfies
    \begin{equation}
    \|u\|_{L^\infty_{[0,+\infty)} (D(A))}+ \|A^{1+s/2}u\|_{L^2_{[0,+\infty)}L^2}\\
\leq C\|u_0\|_{D(A)}.
    \label{eqn: estimate of the global solution}
    \end{equation}
    where $C = C(\alpha, s,n, \nu, \Omega, \|u_0\|_{D(A^{1/2})})$.
\end{theorem}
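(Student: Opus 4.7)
My plan is to prove Theorem \ref{thm: global existence and uniqueness} via a Galerkin--compactness scheme applied to the equivalent formulation \eqref{eqn: equivalent formulation of LANS equation}, using the spectral basis $\{w_j\}_{j\in\mathbb{Z}_+}$ of the Stokes operator. Set $V_N := \mathrm{span}(w_1,\ldots,w_N)$ and let $u^N(t)\in V_N$ solve the projected equation; this is a locally Lipschitz ODE on $V_N$, so local existence is automatic, and global extension will follow from the a priori estimates below. The first is the $D(A^{1/2})$ energy identity: testing the original equation \eqref{eqn: LANS fractional equation} against $u^N$ and exploiting the standard LANS-$\alpha$ cancellation between the convective terms $u^N\cdot\nabla(1-\alpha^2\Delta)u^N$ and $-\alpha^2\nabla (u^N)^T\cdot\Delta u^N$ (which relies only on $\mathrm{div}\,u^N=0$) yields
\begin{equation*}
\tfrac{1}{2}\tfrac{d}{dt}\bigl(\|u^N\|_{L^2}^2 + \alpha^2\|\nabla u^N\|_{L^2}^2\bigr) + \nu\bigl(\|A^{s/2}u^N\|_{L^2}^2 + \alpha^2\|A^{(1+s)/2}u^N\|_{L^2}^2\bigr) = 0.
\end{equation*}
This gives uniform-in-$N$ bounds in $L^\infty_{[0,\infty)}(D(A^{1/2}))\cap L^2_{[0,\infty)}(D(A^{(1+s)/2}))$ controlled by $\|u_0\|_{D(A^{1/2})}$.

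The key step, and where I expect the main obstacle, is upgrading this to a $D(A)$-bound under the sharp hypothesis $s\geq n/4$. For this I work with the equivalent $v$-formulation \eqref{eqn: v equation} applied to $v^N := (1+\alpha^2 A)u^N$ (still divergence-free); testing against $v^N$ and noting that the transport term $u^N\cdot\nabla v^N$ integrates to zero, I obtain
\begin{equation*}
\tfrac{1}{2}\tfrac{d}{dt}\|v^N\|_{L^2}^2 + \nu\|A^{s/2}v^N\|_{L^2}^2 = -\int_\Omega \bigl(\nabla (u^N)^T v^N\bigr)\cdot v^N\,dx.
\end{equation*}
The cubic right-hand side is bounded via H\"{o}lder by $\|\nabla u^N\|_{L^4}\|v^N\|_{L^4}\|v^N\|_{L^2}$, and this is where the hypothesis $s\geq n/4$ enters sharply: the Sobolev embedding $D(A^{s/2})\hookrightarrow L^4$ (which holds precisely when $s\geq n/4$, the case $s=n/4$ being the endpoint) together with elliptic regularity for $(1+\alpha^2 A)$ yield $\|v^N\|_{L^4}\lesssim \|A^{s/2}v^N\|_{L^2} + \|v^N\|_{L^2}$ and $\|\nabla u^N\|_{L^4}\lesssim \|A^{(1+s)/2}u^N\|_{L^2} + \|u^N\|_{D(A^{1/2})}$. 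After Young's inequality absorbs $\|A^{s/2}v^N\|_{L^2}^2$ into the dissipation, the estimate reduces to
\begin{equation*}
\tfrac{d}{dt}\|v^N\|_{L^2}^2 + \nu\|A^{s/2}v^N\|_{L^2}^2 \leq C\bigl(\|A^{(1+s)/2}u^N\|_{L^2}^2 + \|u^N\|_{D(A^{1/2})}^2\bigr)\|v^N\|_{L^2}^2,
\end{equation*}
whose coefficient is $L^1_t$ on $[0,T]$ by the first estimate; Gr\"{o}nwall then closes the bound. The equivalences $\|v^N\|_{L^2}\sim \|u^N\|_{D(A)}$ and $\|A^{s/2}v^N\|_{L^2}\sim \|A^{1+s/2}u^N\|_{L^2}$ (modulo lower-order terms already controlled) translate this into \eqref{eqn: estimate of the global solution}.

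The remaining steps are routine. The equation itself yields $\partial_t u^N$ uniformly bounded in $L^2_{[0,T]}(D(A^{1-s/2}))$, since $\nu A^s u^N\in L^2(D(A^{1-s/2}))$ by the $D(A^{1+s/2})$-bound and the bilinear terms inherit a similar bound from the mapping properties of $\mathcal{P}^\alpha$ and $\mathcal{U}^\alpha$ applied to the $D(A)$-estimate on $u^N$. An Aubin--Lions compactness argument then extracts a subsequence converging to a limit $u$ that solves \eqref{eqn: equivalent formulation of LANS equation} and inherits all the stated regularity; $u\in C(D(A))$ follows from $u\in L^\infty(D(A))$ together with the $\partial_t u$-bound by standard interpolation. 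For uniqueness, the difference $w=u_1-u_2$ of two solutions satisfies a linear-in-$w$ evolution whose coefficients are controlled by the $D(A)$-norms of $u_1,u_2$; testing the corresponding $v$-level equation against the analogous auxiliary variable associated with $w$ and invoking the same Sobolev estimate produces a Gr\"{o}nwall bound that forces $w\equiv 0$.
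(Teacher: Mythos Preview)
Your approach is correct in spirit and genuinely different from the paper's. The paper splits the argument: it first proves local well-posedness (Proposition~\ref{prop: local existence and uniqueness}) by Galerkin approximation, testing the $u$-equation against $A^2 u^N$ and exploiting the algebraic cancellation \eqref{eqn: explore cancellation in the transport term} in $\langle A^2 u^N, u^N\cdot\nabla u^N\rangle$; it then derives the $H^1$ energy identity \eqref{eqn: global H^1 energy estimate} and the refined $D(A)$-bound on the \emph{actual} local solution $u_*$ (not on $u^N$) to extend globally. You instead work entirely at the Galerkin level via the $v$-formulation \eqref{eqn: v equation}, where the transport cancellation $\langle v^N, u^N\cdot\nabla v^N\rangle = 0$ is immediate, leaving only the stretching term $\langle v^N, (\nabla u^N)^T v^N\rangle$ to estimate by H\"older $(4,4,2)$ and the endpoint embedding $D(A^{s/2})\hookrightarrow L^4$. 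This is cleaner for the a priori bound and makes the sharpness of $s\ge n/4$ transparent.

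Two points need attention. First, on $\Omega=\mathbb{R}^n$ your Gr\"onwall coefficient contains $\|u^N\|_{D(A^{1/2})}^2$, which is only $L^1_{loc}$ in time, not $L^1_{[0,\infty)}$; as written you obtain global existence but not the uniform-in-time bound \eqref{eqn: estimate of the global solution}. The fix is to use the \emph{homogeneous} embedding $\dot H^{n/4}\hookrightarrow L^4$ so that the lower-order term disappears and only $\|A^{(1+s)/2}u^N\|_{L^2}^2$ (which \emph{is} in $L^1_{[0,\infty)}$ by the $H^1$-identity) survives in the coefficient; the paper handles this same issue by a separate argument in Case~2 of its proof. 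Second, your uniqueness sketch is too optimistic: in the $v$-level difference equation the term $\langle v_w,\, w\cdot\nabla v_2\rangle$ (with $v_w=(1+\alpha^2 A)w$, $v_2=(1+\alpha^2 A)u_2$) does \emph{not} fall to ``the same Sobolev estimate'', because $\nabla v_2$ lies only in $L^2_t H^{s-1}$ with $s-1<0$, so no H\"older splitting puts it in an $L^r$ space. It can be controlled by writing $w\cdot\nabla v_2=\mathrm{div}(w\otimes v_2)$ and using the duality pairing $\|v_w\|_{H^s}\|w\otimes v_2\|_{H^{1-s}}$ together with a product estimate $\|w\otimes v_2\|_{H^{1-s}}\lesssim\|w\|_{D(A)}\|v_2\|_{H^s}$, but this is a different mechanism than the one you invoke. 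The paper sidesteps the issue by running uniqueness at the $u$-level with the same $A^2$-pairing as in its existence step.
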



As the first step towards the global well-posedness, the following proposition states the local well-posedness result.
Note that in the 3-D case, the range of $s$ for the local well-posedness is wider than that in the global well-posedness result.
\begin{proposition}[Local well-posedness]\label{prop: local existence and uniqueness}
Assume \eqref{eqn: assumption on the regularity of the domain} and $s\in[\frac{1}{2},1)$, and let $u_0 \in D(A)$.
Then there exists $T = T(\alpha,\nu,s,n,\Omega, u_0)>0$ and a unique solution $u\in C_{[0,T]}(D(A))\cap L^2_T(D(A^{1+s/2}))$ with $\partial_t u\in L_{T}^2 (D(A^{1-s/2}))$ solving \eqref{eqn: equivalent formulation of LANS equation} with initial condition $u|_{t = 0} = u_0$ (and boundary conditions \eqref{eqn: boundary conditions of LANS fractional} if $\Omega$ is a smooth bounded domain), which satisfies
\begin{equation}
\|u\|_{L^\infty_T (D(A))}^2+ \nu\int_0^T \|A^{1+s/2}u\|_{L^2}^2\,dt\\
\leq C\|u_0\|_{D(A)}^2.
\label{eqn: estimate of the local solution}
\end{equation}
where $C$ is a universal constant.


\begin{proof}

The main ingredient of the proof is the Galerkin approximation.
We proceed in two different cases.

\setcounter{case}{0}
\begin{case}
In this case, assume $\Omega$ is a smooth bounded domain.
\setcounter{step}{0}
\begin{step}
For any $r\geq 0$, let $\mathcal{P}_N$ be the orthogonal projection from $V^r$ to $V_N = \mathrm{span}\{w_1,\cdots, w_N\}$, where $w_j$'s are eigenfunctions of the Stokes operator defined in Section \ref{section: preliminaries}.
Let $u^N $ solve
\begin{equation}
\partial_t u^N + \nu A^s u^N +\mathcal{P}_N\mathcal{P}^\alpha [u^N\cdot \nabla u^N+\mathcal{U}^\alpha(u^N,u^N)]=0,\quad u^N|_{t = 0} = \mathcal{P}_N u_0.
\label{eqn: fixed point problem in finite dim space}
\end{equation}
To construct such $u^N$, assume $u^N = \sum_{j=1}^N a_j(t)w_j$.
Then \eqref{eqn: fixed point problem in finite dim space} can be written as
an ODE system for $a_j$'s, i.e.,
\begin{equation*}
\frac{da_j}{dt} + \nu \mu_j^s a_j + \sum_{k,l = 1}^N a_k a_l[\langle w_j,w_k\cdot\nabla w_l \rangle +\langle w_j,\mathcal{U}^\alpha(w_k,w_l) \rangle],\quad a_j(0) = \langle w_j, u_0\rangle.
\end{equation*}
Here $\langle\cdot,\cdot\rangle$ denotes the $L^2$-inner product on $\Omega$.
It is not hard to show the inner products all have finite values. 
Then local existence and uniqueness of $u^N$ follows from the classic ODE theory.

\end{step}

\begin{step}
We shall derive energy estimates for $u^N$.
It is straightforward to find that
\begin{equation}
\frac{1}{2}\frac{d}{dt}\|u^N\|_{L^2}^2 + \nu \|A^{s/2}u^N\|_{L^2}^2 = -\langle u^N,\mathcal{U}^\alpha(u^N,u^N)\rangle.
\label{eqn: low order energy estimate crude form}
\end{equation}
By the definition of $\mathcal{U}^\alpha$ in \eqref{eqn: def of U}, for arbitrary $v_1,v_2\in D(A)$,
\begin{equation*}
\begin{split}
&\;\|\mathcal{U}^\alpha(v_1,v_2)\|_{H_0^1(\Omega)}\\
\leq &\;C\|\nabla v_1\cdot \nabla v_2^T+\nabla v_1\cdot \nabla v_2-\nabla v_1^T\cdot \nabla v_2\|_{L^2}\\
\leq &\;C\|v_1\|_{D(A)}\|v_2\|_{D(A)},
\end{split}
\end{equation*}
which yields
\begin{equation}
\frac{1}{2}\frac{d}{dt}\|u^N\|_{L^2}^2 + \nu \|A^{s/2}u^N\|_{L^2}^2 \leq C\|u^N\|_{D(A)}^2 \|u^N\|_{L^2}.
\label{eqn: low order energy estimate}
\end{equation}
%
%
Next, we derive a higher order estimate.
Taking inner product of \eqref{eqn: fixed point problem in finite dim space} and $A^2 u^N$,
\begin{equation}
\frac{1}{2}\frac{d}{dt}\|A u^N\|_{L^2}^2 + \nu \|A^{1+s/2}u^N\|_{L^2}^2 = -\langle A^2 u^N,u^N\cdot \nabla u^N\rangle -\langle A^2 u^N,\mathcal{U}^\alpha(u^N,u^N)\rangle.
\label{eqn: higher energy estimate crude form}
\end{equation}
Since $u^N$ is smooth, assuming $-\Delta u^N = Au^N+\nabla p^N$ for some $p^N$, we derive that
\begin{equation}
\begin{split}
&\;\langle A^2 u^N,u^N\cdot \nabla u^N\rangle\\
=&\;\langle A u^N,(-\Delta)(u^N\cdot \nabla u^N)\rangle\\
=&\;\langle A u^N,u^N\cdot \nabla (-\Delta)u^N -\Delta u^N\cdot \nabla u^N - 2\partial_k u_j^N\partial_{jk}u^N \rangle\\
=&\;\langle A u^N,u^N\cdot \nabla \nabla p^N -\Delta u^N\cdot \nabla u^N - 2\partial_k u_j^N\partial_{jk}u^N \rangle\\
=&\;\langle A u^N,-(\nabla u^N)^T\cdot \nabla p^N -\Delta u^N\cdot \nabla u^N - 2\partial_k u_j^T\partial_{jk}u^N \rangle\\
=&\;\langle A u^N,(\nabla u^N)^T (Au^N+\Delta u^N) -\Delta u^N\cdot \nabla u^N - 2\partial_k u_j^N\partial_{jk}u^N \rangle.
\end{split}
\label{eqn: explore cancellation in the transport term}
\end{equation}
Combining this with Remark \ref{rmk: relation between D(A^r) and V^2r} and the assumption $s\in [ \frac{1}{2},1)$,
\begin{equation}
\begin{split}
&\;|\langle A^2 u^N,u^N\cdot \nabla u^N\rangle|\\
\leq &\;C\| A u^N\|_{L^{\frac{2n}{n-2s}}}\|\nabla u^N\|_{L^{\frac{n}{s}}}(\|Au^N\|_{L^2}+\|\nabla^2 u^N\|_{L^2})\\
\leq &\;C\|u^N\|_{D(A^{1+s/2})}\|u^N\|_{D(A^{(1+\frac{n}{2}-s)/2})}\|u^N\|_{D(A)}\\
\leq &\;C\|u^N\|_{D(A^{1+s/2})}\|u^N\|_{D(A)}^2.
\end{split}
\label{eqn: higher order estimate for inner product of the transport term}
\end{equation}
In addition, for arbitrary $v_1,v_2\in D(A^{1+s/2})$,
\begin{equation*}
\|\mathcal{P}^\alpha\mathcal{U}^\alpha(v_1,v_2)\|_{D(A)}
\leq C\|\nabla v_1\cdot (\nabla v_2)^T+\nabla v_1\cdot \nabla v_2-(\nabla v_1)^T\cdot \nabla v_2\|_{H^1}.
\end{equation*}
By Sobolev embedding and Remark \ref{rmk: relation between D(A^r) and V^2r},
\begin{equation}
\begin{split}
&\;\|\nabla v_1\cdot (\nabla v_2)^T+\nabla v_1\cdot \nabla v_2-(\nabla v_1)^T\cdot \nabla v_2\|_{H^1}\\
\leq &\;C\|\nabla^2 v_1\|_{L^{\frac{2n}{n-2s}}}\|\nabla v_2\|_{L^{\frac{n}{s}}}+C\|\nabla v_1\|_{L^{\frac{n}{s}}}\|\nabla^2 v_2\|_{L^{\frac{2n}{n-2s}}}\\
\leq &\;C\|v_1\|_{D(A^{1+s/2})}\|v_2\|_{D(A^{(1+\frac{n}{2}-s)/2})}+C\| v_1\|_{D(A^{(1+\frac{n}{2}-s)/2})}\|v_2\|_{D(A^{1+s/2})},
\end{split}
\label{eqn: D(A) estimate of source term v1 v2}
\end{equation}
which implies that
\begin{equation}
\|\mathcal{P}^\alpha\mathcal{U}^\alpha(u^N,u^N)\|_{D(A)}
\leq C\|u^N\|_{D(A)}\|u^N\|_{D(A^{1+s/2})}.
\label{eqn: D(A) estimate of source term}
\end{equation}
Hence,
\begin{equation}
|\langle A^2 u^N,\mathcal{U}^\alpha(u^N,u^N)\rangle|\\
\leq C\|u^N\|_{D(A)}^2\|u^N\|_{D(A^{1+s/2})}.
\label{eqn: higher order estimate for the source term}
\end{equation}
Note that when $\Omega$ is a smooth bounded domain, $\|v\|_{D(A^r)}\leq C\|A^r v\|_{L^2}$ for some constant $C$ depending on $r>0$ and $\Omega$.
Combining \eqref{eqn: low order energy estimate}-
\eqref{eqn: higher order estimate for the source term}, 
\begin{equation}
\frac{1}{2}\frac{d}{dt}\|u^N\|_{D(A)}^2 + \nu \|A^{1+s/2}u^N\|_{L^2}^2
\leq C\|u^N\|_{D(A)}^2\|A^{1+s/2}u^N\|_{L^2}.
\label{eqn: higher order energy estimate for u^N}
\end{equation}
By Young's inequality,
\begin{equation*}
\frac{d}{dt}\|u^N\|_{D(A)}^2 + \nu \|A^{1+s/2}u^N\|_{L^2}^2
\leq C\nu^{-1}\|u^N\|_{D(A)}^4.
\end{equation*}
Taking time integral yields that
\begin{equation*}
(\|u^N\|_{L^\infty_T( D(A))}^2-\|\mathcal{P}_N u_0\|_{D(A)}^2 )+ \nu\int_0^T \|A^{1+s/2}u^N\|_{L^2}^2\,dt\leq C_*T\|u^N\|_{L^\infty_T(D(A))}^4.
\end{equation*}
Here $C_* = C_*(\alpha,s, n,\nu,\Omega)$.
By the continuity of $u^N$ in time, if $T$ is taken to be sufficiently small, which only depends on $\|u_0\|_{D(A)}$ and the constant $C_*$ but not on $N$, we deduce that $u^N$ exists on $[0,T]$ by a continuation argument if needed, and
\begin{equation}
\|u^N\|_{L^\infty_T (D(A))}^2+ \nu\int_0^T \|A^{1+s/2}u^N\|_{L^2}^2\,dt\\
\leq C\|u_0\|_{D(A)}^2.
\label{eqn: higher order energy estimate for u^N without T}
\end{equation}
Here $C$ is a universal constant.

\end{step}

\begin{step}
Since the bound in \eqref{eqn: higher order energy estimate for u^N without T} is uniform in $N$, there exists $u\in L^\infty_T (D(A))\cap L^2_T (D(A^{1+s/2}))$ satisfying \eqref{eqn: estimate of the local solution},
such that up to a subsequence, $u^N$ weak-$*$ converges to $u$ in $L^\infty_T( D(A))$ and weakly in $L^2_T (D(A^{1+s/2}))$.

Next we derive an estimate for $\partial_t u^N$.
For arbitrary $v_1,v_2 \in D(A^{1+s/2})$,
\begin{equation}
\|\mathcal{P}^\alpha(v_1\cdot \nabla v_2)\|_{D(A^{1-s/2})}\leq C\|v_1\|_{D(A)}\|\nabla v_2\|_{H^{2-s}}\leq C\|v_1\|_{D(A)}\|v_2\|_{D(A^{1+s/2})}.
\label{eqn: higher order estimate for the transport term}
\end{equation}
Note that in the last inequality, we needed $s\geq 1/2$.
Combining this with \eqref{eqn: fixed point problem in finite dim space} and \eqref{eqn: D(A) estimate of source term}, we use boundedness of $\mathcal{P}_N$ and $\mathcal{P}^\alpha$ to derive that
\begin{equation*}
\begin{split}
\|\partial_t u^N\|_{D(A^{1-s/2})}\leq &\;C\|A^s u^N\|_{D(A^{1-s/2})}+ C\|u^N\|_{D(A)}\|u^N\|_{D(A^{1+s/2})}\\
\leq &\;C\|A^{1+s/2} u^N\|_{L^2}(1+ \|u^N\|_{D(A)}).
\end{split}
\end{equation*}
Thanks to \eqref{eqn: higher order energy estimate for u^N without T}, this implies that $\partial_t u^N$ has a uniform-in-$N$ bound in $L^2_T (D(A^{1-s/2}))$.
By interpolation and the Aubin-Lions Lemma \cite{temam1984navier}, $u^N\to u$ strongly in $L^p_T(D(A))$ for all $p\in [1,\infty)$.
This together with the weak convergence $u^N\rightharpoonup u$ in $L^2_T(D(A^{1+s/2}))$ is then sufficient for passing to the limit $N\to \infty$ in \eqref{eqn: fixed point problem in finite dim space}, which implies that $u$ is a weak solution.
Arguing as above, $\partial_t u\in L^2_T D(A^{1-s/2})$.
By a classic argument \cite[Lemma 1.2 in Chapter III]{temam1984navier},
$u$ is almost everywhere equal to a continuous function valued in $D(A)$, i.e., $u\in C_{[0,T]}(D(A))\cap L^2_T (D(A^{1+s/2}))$.
Moreover, $u$ satisfies the initial condition in \eqref{eqn: fixed point problem in finite dim space} since $\mathcal{P}_N u_0\to u_0$ strongly in $D(A)$.

\end{step}

\begin{step}
It remains to show the uniqueness.
Suppose there are two solutions $u_1$ and $u_2$ for \eqref{eqn: equivalent formulation of LANS equation} satisfying \eqref{eqn: estimate of the local solution}.
Define $w = u_1-u_2$.
Then $w\in C_{[0,T]}(D(A))\cap L^2_T (D(A^{1+s/2}))$ solves
\begin{equation*}
\partial_t w +\nu A^s w +\mathcal{P}^\alpha [w\cdot\nabla u_2 +\mathcal{U}^\alpha(w,u_2)]+\mathcal{P}^\alpha [u_1\cdot\nabla w +\mathcal{U}^\alpha(u_1,w)] = 0
\end{equation*}
with zero initial condition.
Similar to \eqref{eqn: low order energy estimate crude form}-\eqref{eqn: higher order energy estimate for u^N}, we derive energy estimates for $w$,
\begin{equation*}
\begin{split}
&\;\frac{1}{2}\frac{d}{dt}\|w\|_{L^2}^2 +\nu \|A^{s/2}w\|_{L^2}^2 \\
\leq &\;|\langle w,w\cdot \nabla u_2+\mathcal{U}^\alpha(w,u_2)\rangle|+|\langle w,\mathcal{U}^\alpha(u_1,w)\rangle|\\
\leq &\;C(\|u_1\|_{D(A)}+\|u_2\|_{D(A)})\|w\|_{D(A)}^2,
\end{split}
\end{equation*}
and
\begin{equation*}
\begin{split}
&\;\frac{1}{2}\frac{d}{dt}\|Aw\|_{L^2}^2 +\nu \|A^{1+s/2}w\|_{L^2}^2 \\
\leq &\;|\langle A^{1+s/2}w,A^{1-s/2}\mathcal{P}^\alpha(w\cdot \nabla u_2)\rangle|+|\langle Aw,(-\Delta)(u_1\cdot \nabla w)\rangle|\\
&\;+|\langle A w,A\mathcal{P}^\alpha\mathcal{U}^\alpha(w,u_2)\rangle|+|\langle A w,A\mathcal{P}^\alpha\mathcal{U}^\alpha(u_1,w)\rangle|\\
\leq &\;C\|A^{1+s/2}w\|_{L^2}\|w\|_{D(A)}\| u_2\|_{D(A^{1+s/2})}\\
&\;+|\langle A w,(\nabla u_1)^T (Aw+\Delta w) -\Delta u_1\cdot \nabla w - 2\partial_k u_{1,j}\partial_{jk}w \rangle|\\
&\;+C\| w\|_{D(A)}\|w\|_{D(A^{1+s/2})}(\|u_1\|_{D(A^{1+s/2})}+\|u_2\|_{D(A^{1+s/2})})\\
\leq &\;C\|A^{1+s/2} w\|_{L^2}\|w\|_{D(A)}(\|A^{1+s/2} u_1\|_{L^2}+\|A^{1+s/2} u_2\|_{L^2}).
\end{split}
\end{equation*}
Here we note that the derivation in \eqref{eqn: explore cancellation in the transport term} is originally applied to smooth functions, but it also works here for $w$ by an approximation argument.
To justify this, \eqref{eqn: higher order estimate for the transport term} will be needed.
We omit the details.

Combining these two estimates, by Young's inequality,
\begin{equation*}
\begin{split}
&\;\frac{d}{dt}\|w\|_{D(A)}^2 +\nu \|A^{1+s/2}w\|_{L^2}^2 \\
\leq &\;
C\nu^{-1}\|w\|_{D(A)}^2(\|A^{1+s/2} u_1\|_{L^2}^2+\|A^{1+s/2} u_2\|_{L^2}^2).
\end{split}
\end{equation*}
By Gronwall's inequality and \eqref{eqn: estimate of the local solution}, 
\begin{equation*}
\|w\|_{L^\infty_T D(A)}^2+\nu\|A^{1+s/2}w\|_{L^2_T L^2}^2\leq C(\alpha, s, n, \Omega, \nu,T,\|u_0\|_{D(A)})\|w(0)\|_{D(A)}^2.
\end{equation*}
It follows that $w \equiv 0$ since $w(0) = 0$, and thus $u_1 \equiv u_2$.
This estimate also implies continuous dependence of the solution on the $D(A)$-initial data.

This completes the proof of the local well-posedness in the case of $\Omega$ being a smooth bounded domain.
\end{step}

\end{case}
\begin{case}
Now suppose $\Omega = \mathbb{R}^n$.
Let $\eta\in C_0^\infty(\mathbb{R}^n)$ be an even smooth mollifier, such that $\eta\geq 0$ is supported in the unit ball centered at $0$, and $\eta$ has integral $1$.
Define $\eta_\varepsilon(x) = \varepsilon^{-n} \eta(x/\varepsilon)$.
Let $u^\varepsilon$ solve
\begin{equation}
\partial_t u^\varepsilon + \nu \eta_\varepsilon*\eta_\varepsilon* A^s u^\varepsilon +\mathcal{P}^\alpha \eta_\varepsilon*[(\eta_\varepsilon*u^\varepsilon)\cdot \nabla (\eta_\varepsilon*u^\varepsilon)+\mathcal{U}^\alpha(\eta_\varepsilon*u^\varepsilon,\eta_\varepsilon*u^\varepsilon)]=0
\label{eqn: regularized problem in whole space}
\end{equation}
with initial data $u^\varepsilon|_{t = 0} = \eta_\varepsilon* u_0$.
We shall view \eqref{eqn: regularized problem in whole space} as an ODE $\partial_t u^\varepsilon = F_\varepsilon(u^\varepsilon)$ in $D(A)$, with
\begin{equation*}
F_\varepsilon(u^\varepsilon) = -\nu \eta_\varepsilon*\eta_\varepsilon* A^s u^\varepsilon -\mathcal{P}^\alpha \eta_\varepsilon*[(\eta_\varepsilon*u^\varepsilon)\cdot \nabla (\eta_\varepsilon*u^\varepsilon)+\mathcal{U}^\alpha(\eta_\varepsilon*u^\varepsilon,\eta_\varepsilon*u^\varepsilon)].
\end{equation*}
Thanks to \eqref{eqn: D(A) estimate of source term}, \eqref{eqn: higher order estimate for the transport term} and smoothness of $\eta_\varepsilon$, it is not hard to show that $F_\varepsilon(u^\varepsilon)$ is locally Lipschitz in $u^\varepsilon\in D(A)$.
Then local existence and uniqueness of $u^\varepsilon\in C^1_{T_\varepsilon}(D(A))$ follows from ODE theory on Banach spaces.

Then we derive energy estimates for $u^\varepsilon$.
Note that in the whole space case, $A^s$ applied to $u^\varepsilon\in D(A)$ is simply a Fourier multiplier, which thus commutes with the mollification by $\eta_\varepsilon$.
We proceed as in the bounded domain case to find that
\begin{equation*}
\frac{1}{2}\frac{d}{dt}\|u^\varepsilon\|_{L^2}^2+\nu \|\eta_\varepsilon* A^{s/2}u^\varepsilon\|_{L^2}
\leq C\|\eta_\varepsilon* u^\varepsilon\|_{L^2}\|\eta_\varepsilon* u^\varepsilon\|_{D(A)}^2,
\end{equation*}
and
\begin{equation*}
\frac{1}{2}\frac{d}{dt}\|Au^\varepsilon\|_{L^2}^2+\nu \|\eta_\varepsilon* A^{1+s/2}u^\varepsilon\|_{L^2}
\leq C\|\eta_\varepsilon
*u^\varepsilon\|_{D(A)}^2\|\eta_\varepsilon
*u^\varepsilon\|_{D(A^{1+s/2})}.
\end{equation*}
We use $\|\eta_\varepsilon
*u^\varepsilon\|_{D(A^{1+s/2})}\leq \|\eta_\varepsilon
*u^\varepsilon\|_{D(A)}+\|\eta_\varepsilon
*A^{1+s/2}u^\varepsilon\|_{L^2}$ and Young's inequality to derive that
\begin{equation*}
\frac{d}{dt}\|Au^\varepsilon\|_{L^2}^2+\nu \|\eta_\varepsilon* A^{1+s/2}u^\varepsilon\|_{L^2}
\leq C\|\eta_\varepsilon
*u^\varepsilon\|_{D(A)}^3+C\nu^{-1}\|\eta_\varepsilon
*u^\varepsilon\|_{D(A)}^4.
\end{equation*}
Combining these estimates, by Young's inequality for convolutions,
\begin{equation*}
\frac{d}{dt}\|u^\varepsilon\|_{D(A)}^2+\nu \|\eta_\varepsilon* A^{1+s/2}u^\varepsilon\|_{L^2}
\leq C\|u^\varepsilon\|_{D(A)}^3+C\nu^{-1}\|u^\varepsilon\|_{D(A)}^4.
\end{equation*}
Taking time integral yields that
\begin{equation*}
\begin{split}
&\;(\|u^\varepsilon\|_{L^\infty_T( D(A))}^2-\|\eta_\varepsilon
*u_0\|_{D(A)}^2 )+ \nu\int_0^T \|\eta_\varepsilon* A^{1+s/2}
u^\varepsilon\|_{L^2}^2\,dt\\
\leq &\;C_*T(\|u^\varepsilon\|_{L^\infty_T(D(A))}^3+\nu^{-1}\|u^\varepsilon\|_{L^\infty_T(D(A))}^4).
\end{split}
\end{equation*}
Here $C_* = C_*(\alpha,s, n,\Omega)$.
By the continuity of $u^\varepsilon$ in time, if $T$ is taken to be sufficiently small, which only depends on $\|u_0\|_{D(A)}$, $\nu$ and the constant $C_*$ but not on $\varepsilon$, we deduce that $u^\varepsilon$ exists on $[0,T]$ by a continuation argument if needed, and
\begin{equation}
\|\eta_\varepsilon *u^\varepsilon\|_{L^\infty_T (D(A))}^2+ \nu\int_0^T \|\eta_\varepsilon* A^{1+s/2}u^\varepsilon\|_{L^2}^2\,dt\\
\leq C\|u_0\|_{D(A)}^2.
\label{eqn: higher order energy estimate for u^eps without T}
\end{equation}
Here $C$ is a universal constant.

To this end, since $\eta_\varepsilon*u^\varepsilon$ is uniformly bounded in $L^\infty_T(D(A))\cap L^2_T(D(A^{1+s/2}))$,
there exists $u\in L^\infty_T(D(A))\cap L^2_T(D(A^{1+s/2}))$ satisfying \eqref{eqn: estimate of the local solution}, such that up to a subsequence, $\eta_\varepsilon *u^\varepsilon$ weak-$*$ converges to $u$ in $L^\infty_T( D(A))$ and weakly in $L^2_T (D(A^{1+s/2}))$.
We argue as in the bounded domain case that %
\begin{equation*}
\|\partial_t u^\varepsilon\|_{D(A^{1-s/2})}
\leq C\|\eta_\varepsilon* u^\varepsilon\|_{D(A^{1+s/2})}(1+ \|\eta_\varepsilon*u^\varepsilon\|_{D(A)}).
\end{equation*}
By \eqref{eqn: higher order energy estimate for u^eps without T}, this implies that $\partial_t (\eta_\varepsilon* u^\varepsilon)$ has a uniform-in-$\varepsilon$ bound in $L^2_T (D(A^{1-s/2}))$.
By interpolation and the Aubin-Lions Lemma \cite{temam1984navier}, $\eta_\varepsilon* u^\varepsilon\to u$ strongly in $L^p_T(D(A))$ for all $p\in [1,\infty)$.
This together with the weak convergence $u^\varepsilon\rightharpoonup u$ in $L^2_T(D(A^{1+s/2}))$ is then sufficient for passing to the limit $\varepsilon\to 0$ in \eqref{eqn: regularized problem in whole space} mollified by $\eta_\varepsilon$, which implies that $u$ is a weak solution.
Time continuity of $u$ in $D(A)$ can be justified as before.
So are the initial condition and the uniqueness of $u$.
\end{case}

This completes the proof.
\end{proof}
\end{proposition}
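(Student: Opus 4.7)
The natural plan is to construct solutions by approximation. In the smooth bounded domain case I would use Galerkin truncation on the Stokes eigenbasis $\{w_j\}$: project onto $V_N=\mathrm{span}\{w_1,\dots,w_N\}$ and solve the finite-dimensional ODE system for $u^N$ governed by
\begin{equation*}
\partial_t u^N + \nu A^s u^N + \mathcal{P}_N \mathcal{P}^\alpha[u^N\cdot\nabla u^N + \mathcal{U}^\alpha(u^N,u^N)] = 0,\qquad u^N|_{t=0}=\mathcal{P}_N u_0.
\end{equation*}
Local existence follows from classical ODE theory since the right-hand side is polynomial in the coefficients. In the whole-space case, a direct Galerkin basis is not available, so I would instead mollify: for a standard mollifier $\eta_\varepsilon$, solve the regularized equation viewed as a Banach-space ODE in $D(A)$, whose right-hand side is locally Lipschitz thanks to the smoothing of $\eta_\varepsilon$. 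In both schemes the goal is a bound uniform in the approximation parameter in $L^\infty_T D(A)\cap L^2_T D(A^{1+s/2})$, followed by a compactness argument.

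\textbf{Energy estimate in $D(A)$.} The technical heart is the $D(A)$ estimate. Pairing the Galerkin equation with $A^2 u^N$ produces
\begin{equation*}
\tfrac{1}{2}\tfrac{d}{dt}\|A u^N\|_{L^2}^2 + \nu\|A^{1+s/2} u^N\|_{L^2}^2 = -\langle A^2u^N, u^N\cdot\nabla u^N\rangle - \langle A^2u^N,\mathcal{U}^\alpha(u^N,u^N)\rangle.
\end{equation*}
The dangerous transport term must be rewritten using $-\Delta u^N = A u^N + \nabla p^N$ to trade the cubic term $\langle A u^N, u^N\cdot\nabla\Delta u^N\rangle$ for lower-order pieces quadratic in $\nabla u^N$ and $\nabla^2 u^N$; this is the cancellation identity one would need to derive with care. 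After this manipulation I would bound the resulting quantities by Sobolev interpolation, controlling $\|\nabla u^N\|_{L^{n/s}}$ by $\|u^N\|_{D(A^{(1+n/2-s)/2})}$ and $\|A u^N\|_{L^{2n/(n-2s)}}$ by $\|u^N\|_{D(A^{1+s/2})}$, whose exponents fit precisely because $s\geq 1/2\geq n/4$. The $\mathcal{U}^\alpha$ term is handled similarly, using the two-derivative gain of $(1-\alpha^2\Delta)^{-1}$ and a product estimate in $H^1$. Each inner product is then majorized by $C\|u^N\|_{D(A)}^2\|A^{1+s/2}u^N\|_{L^2}$, and Young's inequality closes the estimate as $\frac{d}{dt}\|u^N\|_{D(A)}^2 + \nu\|A^{1+s/2}u^N\|_{L^2}^2 \leq C\nu^{-1}\|u^N\|_{D(A)}^4$. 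A continuation argument then yields a lifespan $T$ depending only on $\|u_0\|_{D(A)}$, $\nu$, $\alpha$, $s$, $n$, $\Omega$, together with the claimed bound \eqref{eqn: estimate of the local solution}.

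\textbf{Compactness, limit, and uniqueness.} To extract a limit I would bound $\partial_t u^N$ in $L^2_T D(A^{1-s/2})$ directly from the equation: the linear piece $\nu A^s u^N$ contributes $\nu\|u^N\|_{D(A^{1+s/2})}$, and the nonlinear pieces are handled via the product-type estimate $\|\mathcal{P}^\alpha(v\cdot\nabla v)\|_{D(A^{1-s/2})} \leq C\|v\|_{D(A)}\|v\|_{D(A^{1+s/2})}$, for which the constraint $s\geq 1/2$ is exactly what makes $H^{2-s}$ a convenient space for the product. Aubin--Lions then gives $u^N\to u$ strongly in $L^p_T D(A)$ for every $p<\infty$, sufficient to pass to the limit in the bilinear terms; weak-$*$ limits recover the regularity in the statement, and the time-continuity in $D(A)$ follows from the Lions--Magenes-type lemma quoted in Temam. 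In the whole-space case I would carry out an essentially parallel argument with $\eta_\varepsilon * u^\varepsilon$ in place of $u^N$, using Young's inequality for convolutions to remove the mollifiers in the energy bounds. Uniqueness and continuous dependence would follow by writing the equation for $w=u_1-u_2$, reproducing the $L^2$ and $D(A)$ energy estimates (invoking the same cancellation identity on $w$ by approximation, justified through the $D(A^{1-s/2})$ bound on the transport term), and applying Gronwall against the integrable quantity $\|A^{1+s/2}u_i\|_{L^2}^2$. The main obstacle throughout is the $D(A)$ estimate for $u\cdot\nabla u$: without the cancellation produced by $-\Delta u = Au+\nabla p$, no arrangement of Sobolev embeddings would let the fractional dissipation absorb the nonlinearity for $s<1$.
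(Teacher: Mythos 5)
Your proposal follows essentially the same route as the paper's proof: Galerkin approximation on the Stokes eigenbasis for bounded domains, mollification viewed as a Banach-space ODE for $\mathbb{R}^n$, the key cancellation via $-\Delta u^N = Au^N+\nabla p^N$ when pairing with $A^2u^N$, the same Sobolev exponents $L^{n/s}$ and $L^{2n/(n-2s)}$, the $L^2_T D(A^{1-s/2})$ bound on $\partial_t u^N$ feeding Aubin--Lions, and a Gronwall argument on $w=u_1-u_2$ for uniqueness. The argument is correct and matches the paper's in all essential respects.
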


Now we can prove global well-posedness by combining Proposition \ref{prop: local existence and uniqueness} with a global $H^1$-energy estimate.

\begin{proof}[Proof of Theorem \ref{thm: global existence and uniqueness}]
Take the local solution $u_*\in C_{[0,T]}(D(A))\cap L^2_T(D(A^{1+s/2}))$ that solves
\begin{equation}
\partial_t u_* + \nu A^s u_* = -\mathcal{P}^\alpha[u_*\cdot\nabla u_*+\mathcal{U}^\alpha(u_*,u_*)]\mbox{ in }\Omega\times[0,T],\quad u_*|_{t = 0} = u_0.
\label{eqn: equation of the local solution}
\end{equation}
%
Take inner product of $(1-\alpha^2 \Delta)u_*$ and \eqref{eqn: equation of the local solution}.
It is valid to do so since $u_*\in C_{[0,T]}(D(A))\cap L^2_T(D(A^{1+s/2}))$ while the right hand side is in $ L^2_T D(A^{1-s/2})$, which has been shown in the proof of Proposition \ref{prop: local existence and uniqueness}.
Taking integration by parts,
\begin{equation}
\begin{split}
&\;\frac{1}{2}\frac{d}{dt}(\|u_*\|^2_{L^2(\Omega)} + \alpha^2\|A^{1/2}u_*\|^2_{L^2(\Omega)}) + \nu(\|A^{s/2}u_*\|^2_{L^2(\Omega)} + \alpha^2\|A^{(1+s)/2}u_*\|^2_{L^2(\Omega)})\\
=&\;-\langle (1 - \alpha^2 \Delta)u_*, [u_*\cdot \nabla u_* + \mathcal{U}^\alpha(u_*,u_*)+(1-\alpha^2 \Delta)^{-1}\nabla q]\rangle\\
=&\;-\langle (1 - \alpha^2 \Delta)u_*, u_*\cdot \nabla u_*\rangle - \langle u_*, \alpha^2 \mathrm{div}\,[\nabla u_*\cdot \nabla u_*^T+\nabla u_*\cdot \nabla u_* - \nabla u_*^T\cdot \nabla u_*]\rangle\\
=&\;\alpha^2 \langle \Delta u_*, u_*\cdot \nabla u_*\rangle + \alpha^2\langle \partial_j u_*^i,  \partial_k u_*^i\partial_k u_*^j+\partial_k u_*^i\partial_j u_*^k - \partial_i u_*^k\partial_j u_*^k\rangle\\
=&\;-\alpha^2 \langle \partial_{j} u_*^i, \partial_j u_*^k\partial_k u_*^i\rangle -\alpha^2 \langle \partial_{j} u_*^i,  u_*^k\partial_{kj} u_*^i\rangle + \alpha^2\langle \partial_j u_*^i, \partial_k u_*^i\partial_j u_*^k\rangle = 0.
\end{split}
\label{eqn: global H^1 energy estimate}
\end{equation}
By a limiting argument, this implies that for all $t \in [0,T]$,
\begin{equation}
\begin{split}
&\;(\|u_*\|_{L^2}^2+\alpha^2 \|A^{1/2}u_*\|_{L^2}^2)(t)\\
&\;\quad +2\nu\int_0^t(\|A^{s/2}u_*\|^2_{L^2} +\alpha^2\|A^{(1+s)/2}u_*\|^2_{L^2})(\tau)\,d\tau\\
\leq &\;\|u_0\|_{L^2}^2+\alpha^2 \|A^{1/2}u_0\|_{L^2}^2.
\end{split}
\label{eqn: L^2 energy estimate}
\end{equation}

On the other hand, it holds in the scalar distribution sense on $(0,T)$ that \cite[Lemma 1.2 in Chapter III]{temam1984navier}
\begin{equation*}
\frac{1}{2}\frac{d}{dt}\|Au_*\|_{L^2}^2 + \nu \|A^{1+s/2} u_*\|_{L^2}^2 = -\langle A^2 u_*, u_*\cdot \nabla u_*+\mathcal{U}^\alpha(u_*,u_*)\rangle.
\end{equation*}
By a limiting argument and the continuity of $u_*$ in $D(A)$, for all $t\in[0,T]$,
\begin{equation}
\begin{split}
 &\;\|Au_*\|_{L^2}^2(t) + 2\nu\int_0^t \|A^{1+s/2}u_*\|_{L^2}^2\,d\tau\\
= &\;\|Au_0\|_{L^2}^2 - 2\int_0^t \langle A^2 u_*, u_*\cdot \nabla u_*+\mathcal{U}^\alpha(u_*,u_*)\rangle(\tau)\,d\tau.
\end{split}
\label{eqn: energy estimate of the D(A) semi-norm}
\end{equation}
Once again, the derivation in \eqref{eqn: explore cancellation in the transport term} also work for $u_*$ here by an approximation argument.
It will be used below to bound the integrand.

To this end, we proceed in two cases.
\setcounter{case}{0}
\begin{case}
Suppose $\Omega$ is a bounded smooth domain.
In this case, the norm $\|u_*\|_{D(A^r)}$ is equivalent to the seminorm $\|A^ru_* \|_{L^2}$ for all $r>0$, as all the $\mu_j$'s are positive.
See Section \ref{section: preliminaries}.

Since $s\geq n/4$, by \eqref{eqn: higher order estimate for inner product of the transport term},
\begin{equation}
|\langle A^2 u_*, u_*\cdot \nabla u_*\rangle|\leq C\| u_*\|_{D(A^{1+s/2})}\|u_*\|_{D(A^{(1+s)/2})}\|u_*\|_{D(A)}.
\label{eqn: improved higher order estimate for transport term}
\end{equation}
Likewise, by \eqref{eqn: D(A) estimate of source term v1 v2},
\begin{equation*}
\|\nabla u_*\cdot (\nabla u_*)^T+\nabla u_*\cdot \nabla u_*-(\nabla u_*)^T\cdot \nabla u_*\|_{H^1}\leq C\|u_*\|_{D(A^{1+s/2})}\|u_*\|_{D(A^{(1+s)/2})},
\end{equation*}
and thus
\begin{equation*}
|\langle A^2 u_*, \mathcal{U}^\alpha(u_*,u_*)\rangle|\leq C\|u_*\|_{D(A)}\| u_*\|_{D(A^{1+s/2})}\|u_*\|_{D(A^{(1+s)/2})}.
\end{equation*}
Combining this with \eqref{eqn: L^2 energy estimate}, \eqref{eqn: energy estimate of the D(A) semi-norm} and \eqref{eqn: improved higher order estimate for transport term}, we obtain that
\begin{equation*}
\begin{split}
&\;(\|u_*\|_{D(A)}^2+\alpha^2 \|A^{1/2}u_*\|_{L^2}^2)(t)\\
&\;\quad +2\nu\int_0^t(\|A^{s/2}u_*\|^2_{D(A)} +\alpha^2\|A^{(1+s)/2}u_*\|^2_{L^2})(\tau)\,d\tau\\
\leq  &\;\|u_0\|_{D(A)}^2+\alpha^2 \|A^{1/2}u_0\|_{L^2}^2 \\
&\;+C\int_0^t (\|u_*\|_{D(A)}\| u_*\|_{D(A^{1+s/2})}\|u_*\|_{D(A^{(1+s)/2})})(\tau)\,d\tau.
\end{split}
\end{equation*}
Recall that $\|u_*\|_{D(A^{r})}\leq C\|A^{r}u_*\|_{L^2}$.
By Young's inequality and \eqref{eqn: L^2 energy estimate},
\begin{equation*}
\begin{split}
&\;\int_0^t (\|u_*\|_{D(A)}\| u_*\|_{D(A^{1+s/2})}\|u_*\|_{D(A^{(1+s)/2})})(\tau)\,d\tau\\
\leq 
&\;C\int_0^t (\|u_*\|_{D(A)}\|A^{1+s/2}u_*\|_{L^2}\|A^{(1+s)/2}u_*\|_{L^2})(\tau)\,d\tau\\
\leq  
&\;\nu\int_0^t \|A^{1+s/2}u_*\|_{L^2}^2(\tau)\,d\tau
+C\nu^{-1}\int_0^t \|u_*\|_{D(A)}^2(\tau)\|A^{(1+s)/2}u_*\|_{L^2}^2(\tau)\,d\tau.
\end{split}
\end{equation*}
Hence,
\begin{equation*}
\begin{split}
&\;(\|u_*\|_{D(A)}^2+\alpha^2 \|A^{1/2}u_*\|_{L^2}^2)(t)\\
&\;\quad +\nu\int_0^t(\|A^{s/2}u_*\|^2_{D(A)} +\alpha^2\|A^{(1+s)/2}u_*\|^2_{L^2})(\tau)\,d\tau\\
\leq  &\;\|u_0\|_{D(A)}^2+\alpha^2 \|A^{1/2}u_0\|_{L^2}^2 +C\int_0^t \|u_*\|_{D(A)}^2(\tau)\|A^{(1+s)/2}u_*\|_{L^2}^2(\tau)\,d\tau.
\end{split}
\end{equation*}
where $C= C(\alpha,s,n,\Omega,\nu)$.
Since  $\|A^{(1+s)/2}u_*\|_{L^2}\in L^2([0,T])$ for all $T\in [0,+\infty)$ by \eqref{eqn: L^2 energy estimate} with uniform-in-$T$ bound, then a uniform-in-$T$ global bound for $\|u_*\|_{D(A)}$ follows from the Gronwall's inequality.
Global well-posedness can be proved by the local well-posedness and a continuation argument, and \eqref{eqn: estimate of the global solution} follows also from the last inequality.
\end{case}

\begin{case}
Now suppose $\Omega = \mathbb{R}^n$.
In this case, we shall slightly change the argument so that the final estimate will be uniform in $T>0$.
We shall take advantage of $A^r u_* = (-\Delta)^r u_*$ in this case.

Again by \eqref{eqn: higher order estimate for inner product of the transport term},
\begin{equation*}
|\langle A^2 u_*, u_*\cdot \nabla u_*\rangle|
\leq C\| Au_*\|_{H^s}\|\nabla u_*\|_{H^{\frac{n}{2}-s}}\|u_*\|_{\dot{H}^2},
\end{equation*}
and by \eqref{eqn: D(A) estimate of source term v1 v2},
\begin{equation*}
|\langle A^2 u_*, \mathcal{U}^\alpha(u_*,u_*)\rangle|
\leq C\|Au_*\|_{L^2}\| \nabla^2 u_*\|_{H^s}\|\nabla u_*\|_{H^{\frac{n}{2}-s}}.
\end{equation*}
Combining them with \eqref{eqn: L^2 energy estimate} and \eqref{eqn: energy estimate of the D(A) semi-norm}, by Young's inequality, we obtain that
\begin{equation}
\begin{split}
&\;(\|u_*\|_{D(A)}^2+\alpha^2 \|A^{1/2}u_*\|_{L^2}^2)(t)\\
&\;\quad +2\nu\int_0^t(\|A^{s/2}u_*\|^2_{D(A)} +\alpha^2\|A^{(1+s)/2}u_*\|^2_{L^2})(\tau)\,d\tau\\
\leq  &\;\|u_0\|_{D(A)}^2+\alpha^2 \|A^{1/2}u_0\|_{L^2}^2 \\
&\;+C\int_0^t (\|Au_*\|_{L^2}\| Au_*\|_{D(A^{s/2})}\|A^{1/2} u_*\|_{D(A^{s/2})})(\tau)\,d\tau.
\end{split}
\label{eqn: higher energy estimate whole space}
\end{equation}
By interpolation,
\begin{equation*}
\| Au_*\|_{D(A^{s/2})} \leq C\| A^{s/2}u_*\|_{D(A)},\quad \|A^{1/2} u_*\|_{D(A^{s/2})}\leq C\|A^{s/2} u_*\|_{D(A^{1/2})}.
\end{equation*}
By Young's inequality, \eqref{eqn: higher energy estimate whole space} then becomes
\begin{equation}
\begin{split}
&\;(\|u_*\|_{D(A)}^2+\alpha^2 \|A^{1/2}u_*\|_{L^2}^2)(t)\\
&\;+2\nu\int_0^t(\|A^{s/2}u_*\|^2_{D(A)} +\alpha^2\|A^{(1+s)/2}u_*\|^2_{L^2})(\tau)\,d\tau\\
\leq  &\;\|u_0\|_{D(A)}^2+\alpha^2 \|A^{1/2}u_0\|_{L^2}^2 \\
&\;+C\int_0^t (\|Au_*\|_{L^2}\| A^{s/2}u_*\|_{D(A)}\|A^{s/2} u_*\|_{D(A^{1/2})})(\tau)\,d\tau\\
\leq  &\;\|u_0\|_{D(A)}^2+\alpha^2 \|A^{1/2}u_0\|_{L^2}^2 +\nu\int_0^t \| A^{s/2}u_*\|_{D(A)}^2(\tau)\,d\tau\\
&\;+C\nu^{-1}\int_0^t \|u_*\|_{D(A)}^2(\tau)\|A^{s/2} u_*\|_{D(A^{1/2})}^2(\tau)\,d\tau.
\end{split}
\label{eqn: higher order energy estimate whole space simplified}
\end{equation}
By \eqref{eqn: L^2 energy estimate}, $\|A^{s/2}u_*\|_{L^2_TD(A^{1/2})}\leq C(\alpha, \nu, \|u_0\|_{D(A^{1/2})})$.
Hence, by the Gronwall's inequality, we obtain a uniform-in-time global bound for $\|u_*\|_{D(A)}$, i.e., for all $t\in [0,T]$,
\begin{equation*}
\|u_*\|_{D(A)}(t)\leq C(\alpha,s,n,\nu,\|u_0\|_{D(A^{1/2})}).
\end{equation*}
In particular, this bound does not rely on $T$.
Then the global well-posedness and \eqref{eqn: estimate of the global solution} follow as before.
\end{case}
\end{proof}

\section{Improved Regularity of $u_*$}\label{section: improved regularity}
In this section, we shall show that the global solution $u_*$ gains regularity instantaneously when $t>0$.
We proceed in two different cases.

\subsection{Non-critical case: $s>1/2$}
For simplicity, denote $f(u_1,u_2)= -\mathcal{P}^\alpha[u_1\cdot \nabla u_2 + \mathcal{U}^\alpha(u_1,u_2)]$.
In the proof of Proposition \ref{prop: local existence and uniqueness}, we have derived an $D(A^{1-s/2})$-estimate for $f$ (see \eqref{eqn: D(A) estimate of source term v1 v2}, \eqref{eqn: D(A) estimate of source term} and \eqref{eqn: higher order estimate for the transport term}).
Yet, the following lemma is still useful.
\begin{lemma}\label{lemma: unified spatial estimate of f}
For all $r\in(\frac{n}{2},2]$, 
\begin{equation*}
\|f(u_1,u_2)\|_{D(A^{r/2})}\leq C\|u_1\|_{D(A)}\|A^{1/2} u_2\|_{D(A^{r/2})},
\end{equation*}
where $C = C(\alpha, r, n,\Omega)$.
\begin{proof}
The proof is straightforward.
By the boundedness of $\mathcal{P}^\alpha$ and Remark \ref{rmk: relation between D(A^r) and V^2r},
\begin{equation}
\begin{split}
&\;\|f(u_1,u_2)\|_{D(A^{r/2})} \\
\leq &\;C(\|u_1\cdot\nabla u_2\|_{H^{r}}+\|\nabla u_1\cdot \nabla u_2^T+\nabla u_1\cdot \nabla u_2- \nabla u_1 ^T\cdot \nabla u_2\|_{H^{r-1}})\\
\leq &\;C(\|u_1\|_{H^2}\|\nabla u_2\|_{H^{r}}+\|\nabla u_1\|_{H^{r-1}}\|\nabla u_2\|_{H^{r}})\\
\leq &\;C\|u_1\|_{D(A)}(\|A^{1/2} u_2\|_{L^2}+\|A^{(r+1)/2} u_2\|_{L^2}).
\end{split}
\label{eqn: derivation of the unified spatial estimate of f}
\end{equation}
\end{proof}
\end{lemma}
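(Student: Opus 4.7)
The plan is to split $f(u_1,u_2)$ into its transport part $\mathcal{P}^\alpha(u_1\cdot\nabla u_2)$ and its ``averaging'' part $\mathcal{P}^\alpha \mathcal{U}^\alpha(u_1,u_2)$, reduce $D(A^{r/2})$ to $H^r$ via the continuity of the Stokes projector and Remark \ref{rmk: relation between D(A^r) and V^2r}, and then close each piece with Sobolev product estimates. More precisely, I would first note that $\mathcal{P}^\alpha:H_0^1\cap H^r(\Omega)\to V^r$ is bounded and that $V^r$ embeds into $D(A^{r/2})$ (for $r\in[1,5/4)$ they actually agree, and for the remaining values in $(n/2,2]$ the one-sided embedding used here is the consequence of Remark \ref{rmk: relation between D(A^r) and V^2r} quoted above). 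This lets me bound $\|f(u_1,u_2)\|_{D(A^{r/2})}$ by $\|u_1\cdot\nabla u_2\|_{H^r}$ plus an $H^{r-1}$ norm of the ``gradient-gradient'' expression inside $\mathcal{U}^\alpha$, since $(1-\alpha^2\Delta)^{-1}\,\mathrm{div}$ has a net gain of one derivative.

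Next I would bound the transport term: since $u_1\in D(A)\hookrightarrow H^2$ and $H^2\hookrightarrow L^\infty$ in dimensions $n=2,3$, a Moser-type product estimate valid for $r\in(n/2,2]$ yields
\begin{equation*}
\|u_1\cdot\nabla u_2\|_{H^r}\leq C\|u_1\|_{H^2}\|\nabla u_2\|_{H^r}.
\end{equation*}
For the $\mathcal{U}^\alpha$ contribution I would use that $r>n/2$ makes $H^{r-1}\cdot H^r\hookrightarrow H^{r-1}$ (the higher-regularity factor carries the derivatives), giving
\begin{equation*}
\|\nabla u_1\cdot\nabla u_2\|_{H^{r-1}}\leq C\|\nabla u_1\|_{H^{r-1}}\|\nabla u_2\|_{H^r}\leq C\|u_1\|_{H^r}\|\nabla u_2\|_{H^r},
\end{equation*}
and the same bound for the two symmetric variants appearing in \eqref{eqn: def of U}. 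Both factors $\|u_1\|_{H^r}$ and $\|u_1\|_{H^2}$ are controlled by $\|u_1\|_{D(A)}$ via Remark \ref{rmk: relation between D(A^r) and V^2r} and the assumption $r\leq 2$.

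Finally, to match the right-hand side of the claimed inequality I would rewrite
\begin{equation*}
\|\nabla u_2\|_{H^r}\leq C\bigl(\|A^{1/2}u_2\|_{L^2}+\|A^{(r+1)/2}u_2\|_{L^2}\bigr)=C\|A^{1/2}u_2\|_{D(A^{r/2})},
\end{equation*}
where the last equality is just the definition of $\|\cdot\|_{D(A^{r/2})}$ applied to $A^{1/2}u_2$.

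There is no real obstacle here: the entire argument is Sobolev product calculus. The only point worth watching is the range constraint $r\in(n/2,2]$, which enters twice, namely (i) to place $u_1\in D(A)\hookrightarrow L^\infty$ for the transport term, and (ii) to guarantee $H^{r-1}$ is closed under multiplication by $H^r$-factors for the $\mathcal{U}^\alpha$ term. Both uses are sharp in the lower endpoint $r=n/2$, which is excluded as stated.
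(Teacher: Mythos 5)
Your proof follows essentially the same route as the paper's: reduce the $D(A^{r/2})$-norm to $H^r$ via the boundedness of $\mathcal{P}^\alpha$ and Remark \ref{rmk: relation between D(A^r) and V^2r}, use the one-derivative gain of $(1-\alpha^2\Delta)^{-1}\mathrm{div}$, and close with the product estimates $\|u_1\cdot\nabla u_2\|_{H^r}\leq C\|u_1\|_{H^2}\|\nabla u_2\|_{H^r}$ and $\|\nabla u_1\cdot\nabla u_2\|_{H^{r-1}}\leq C\|\nabla u_1\|_{H^{r-1}}\|\nabla u_2\|_{H^r}$, exactly as in \eqref{eqn: derivation of the unified spatial estimate of f}. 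The only slip is the parenthetical range ``$r\in[1,5/4)$'': the Remark gives $D(A^{\rho})=V^{2\rho}$ for $\rho\in[1/2,5/4)$, i.e.\ $r=2\rho\in[1,5/2)$, so the identification $D(A^{r/2})=V^{r}$ already covers all of $r\in(\frac n2,2]$ and the fallback embedding you invoke (which as stated in the Remark goes in the opposite direction) is not needed.
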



Lemma \ref{lemma: unified spatial estimate of f} roughly shows that the regularity of $f(u_*,u_*)$ is one order lower than that of $u_*$. 
Since the backbone equation $\partial_t u_* +\nu A^s u_* = f$ (i.e., \eqref{eqn: equivalent formulation of LANS equation}) implies that $u_*$ admits regularity $2s$-order higher than $f$, we immediately obtain improved regularity of $u_*$ when $s>1/2$ by bootstrapping, which is why we call the case $s>1/2$ non-critical, and which leads to the following theorem.

\begin{theorem}[Improved regularity of $u_*$ in the non-critical case]\label{thm: improved regularity in non-critical case}
If $s>1/2$, the global solution $u_*$ obtained in Theorem \ref{thm: global existence and uniqueness} satisfies that for all $r\in[0,s/2]$, and all $t>0$,
\begin{equation}
\|u_*(t)\|_{D(A^{1+r})}\leq C(t^{-\frac{r}{s}}+1)\|u_0\|_{D(A)},
\label{eqn: estimate of the improved regularity of u_*}
\end{equation}
where $C = C(\alpha,  s, n, \nu,\Omega, \|u_0\|_{D(A)})$.
In particular, when $\|u_0\|_{D(A)}\to 0$, $C$ converges to a universal constant depending on $\alpha$, $s$, $n$, $\nu$ and $\Omega$.

\begin{proof}
Define $r_k = 2+s+(2s-1)k$ for $k = 1,\cdots , K$, where $K = \left[\frac{1-s}{2s-1}\right]$ is taken in the way that $r_K\leq 3$ while $r_{K+1}>3$.
%
%
We apply Lemma \ref{lemma: unified spatial estimate of f} with $r = r_k-1$ and use interpolation to find that
\begin{equation}
\|f(u_*,u_*)\|_{D(A^{(r_k-1)/2})}\leq C(\alpha, s, n, \Omega)\|u_*\|_{D(A)}\|A^{s/2}u_*\|_{D(A^{(r_k-s)/2})}.
\label{eqn: estimate of f_u in the noncritical case}
\end{equation}

For any fixed $\varepsilon>0$, we shall prove regularity and estimate of $u_*(\varepsilon)$.
Let $\delta = \varepsilon/(K+2)$ and $t_j = j\delta$ for $j = 0,\cdots,K+2$.
It is easy to show that for any $t\geq t_k$, $u_*(t) = e^{-(t-t_k)\nu A^s}u_k + w_k(t)$, where $u_k = u_*(t_k)$, and $w_k(t)$ solves the following Cauchy problem starting from $t= t_k$,
\begin{equation*}
\partial_t w_k + \nu A^s w_k = f(u_*,u_*),\quad w_k|_{t = t_k} = 0.
\end{equation*}
It admits an energy estimate, i.e., for all $t\geq 0$,
\begin{equation}
\begin{split}
&\;\|A^{(r_k-1+s)/2}w_k(t)\|_{L^2}^2 +\int_{t_k}^{t} \|A^{(r_k-1)/2+s}w_k\|_{L^2}^2\\
\leq &\; C\int_{t_k}^{t} \|A^{(r_k-1)/2}f(u_*,u_*)\|_{L^2}^2 \,d\tau\\
\leq &\; C\int_{t_k}^t \|u_*\|_{D(A)}^2(\|A^{s/2}u_*\|_{L^2}^2+\|A^{r_k/2}u_*\|_{L^2}^2) \,d\tau.
\end{split}
\label{eqn: energy estimate in the noncritical case for w_k crude form}
\end{equation}
We used \eqref{eqn: estimate of f_u in the noncritical case} in the last inequality.
Thanks to \eqref{eqn: estimate of the global solution} in Theorem \ref{thm: global existence and uniqueness} and the global $H^1$-estimate \eqref{eqn: L^2 energy estimate} of $u_*$, by the definition of $r_k$,
\begin{equation}
\begin{split}
&\;\|A^{(r_{k+1}-s)/2}w_k(t)\|_{L^2}^2 +\int_{t_k}^{t} \|A^{r_{k+1}/2}w_k\|_{L^2}^2\\
\leq &\; C\|u_0\|_{D(A)}^2\left(1+\int_{t_k}^t \|A^{r_k/2}u_*\|_{L^2}^2 \,d\tau\right),
\end{split}
\label{eqn: energy estimate in the noncritical case for w_k}
\end{equation}
where $C = C(\alpha,  s, n, \nu, \Omega, \|u_0\|_{D(A^{1/2})})$.
This holds for all $t\geq 0$.
For $k=0$, this together with \eqref{eqn: estimate of the global solution} implies
\begin{equation}
\|A^{(r_1-s)/2} w_0(t)\|_{L^2}^2 +\int_0^t \|A^{r_1/2}w_0\|_{L^2}^2\,d\tau\leq C\|u_0\|_{D(A)}^2.
\label{eqn: improved estimates of w_0}
\end{equation}
For all $k\geq 1$, we write $u_*(\tau) = e^{-(\tau-t_{k-1})\nu A^s}u_{k-1}+ w_{k-1}(\tau)$ and derive from \eqref{eqn: energy estimate in the noncritical case for w_k} that
\begin{equation}
\begin{split}
&\;\|A^{(r_{k+1}-s)/2}w_k(t)\|_{L^2}^2 +\int_{t_k}^{t} \|A^{r_{k+1}/2}w_k\|_{L^2}^2\\
\leq &\; C\|u_0\|_{D(A)}^2\int_{t_k}^t \|A^{s/2}e^{-(\tau-t_k)\nu A^s} (A^{(r_k-s)/2}e^{-(t_{k}-t_{k-1})\nu A^s} u_{k-1})\|_{L^2}^2 \,d\tau\\
&\; +C\|u_0\|_{D(A)}^2\left(1+\int_{t_k}^t \|A^{r_k/2}w_{k-1}\|_{L^2}^2 \,d\tau\right)\\
\leq &\; C\|u_0\|_{D(A)}^2 \left(\|A^{(r_k-s)/2}e^{-\delta \nu A^s} u_{k-1}\|_{L^2}^2
+1+\int_{t_{k-1}}^t \|A^{r_k/2}w_{k-1}\|_{L^2}^2 \,d\tau\right)\\
\leq &\; C\|u_0\|_{D(A)}^2 \left[(\nu\delta)^{-\frac{2s-1}{s}}\|A^{(r_{k-1}-s)/2} u_{k-1}\|_{L^2}^2
+1+\int_{t_{k-1}}^t \|A^{r_k/2}w_{k-1}\|_{L^2}^2 \,d\tau\right].
\end{split}
\label{eqn: improved estimates of w_k crude form}
\end{equation}
Here we used energy estimate for semigroup solutions.
%
%
Since $t\geq 0$ is arbitrary, this implies that
\begin{equation}
\begin{split}
&\;\|A^{(r_{k+1}-s)/2}w_k(t_{k+1})\|_{L^2}^2 +\int_{t_k}^{\infty} \|A^{r_{k+1}/2}w_k\|_{L^2}^2\\
\leq &\; C\|u_0\|_{D(A)}^2 (\nu\delta)^{-\frac{2s-1}{s}}\|A^{(r_{k-1}-s)/2} u_{k-1}\|_{L^2}^2\\
&\;+C\|u_0\|_{D(A)}^2 \left[
1+\int_{t_{k-1}}^\infty \|A^{r_k/2}w_{k-1}\|_{L^2}^2 \,d\tau\right].
\end{split}
\label{eqn: improved estimates of w_k}
\end{equation}
Since $u_{k} = u_*(t_k) = e^{-\delta  v A^s}u_{k-1} + w_{k-1}(t_k)$ for all $k = 0,\cdots, K+1$,
\begin{equation}
\begin{split}
&\;\|A^{(r_k-s)/2} u_k\|_{L^2}^2\\
\leq &\; C\left(\|A^{(r_k-s)/2}e^{-\delta\nu A^s}u_{k-1}\|_{L^2}^2+\|A^{(r_k-s)/2}w_{k-1}(t_k)\|_{L^2}^2\right)\\
\leq &\;C\left[(\nu\delta)^{-\frac{2s-1}{s}}\|A^{(r_{k-1}-s)/2}u_{k-1}\|_{L^2}^2+\|A^{(r_k-s)/2}w_{k-1}(t_k)\|_{L^2}^2\right].
\end{split}
\label{eqn: bounding norm of u_k}
\end{equation}
To this end, we claim that for $j = 0,\cdots,K$,
\begin{equation}
\begin{split}
&\;\|A^{(r_j-s)/2}u_j\|_{L^2}^2+\|A^{(r_{j+1}-s)/2}w_j(t_{j+1})\|_{L^2}^2+\int_{t_j}^\infty\|A^{r_{j+1}/2}w_j\|_{L^2}^2\,d\tau \\
\leq &\;C(\delta^{-\frac{j(2s-1)}{s}}+1)(\|u_0\|^2_{D(A)}+1)^{j}\|u_0\|^2_{D(A)},
\end{split}
\label{eqn: estimate of u_j and w_j}
\end{equation}
where $C = C(\alpha,  s, n, \nu, \Omega, \|u_0\|_{D(A^{1/2})},j)$.
We shall prove this by \eqref{eqn: improved estimates of w_k}, \eqref{eqn: bounding norm of u_k} and induction.
Indeed, for $j = 0$, \eqref{eqn: estimate of u_j and w_j} follows from \eqref{eqn: improved estimates of w_0} immediately.
Now suppose \eqref{eqn: estimate of u_j and w_j} holds for $j \leq k-1$ with some $k\geq 1$.
Then for $j = k$, \eqref{eqn: estimate of u_j and w_j} follows immediately from \eqref{eqn: improved estimates of w_k} and \eqref{eqn: bounding norm of u_k}.
This justifies the claim.

To this end, we shall perform the last-step improvement to derive an estimate for $u_*(\varepsilon) = e^{-\delta\nu A^s}u_{K+1}+w_{K+1}(t_{K+2})$.
Similar to \eqref{eqn: energy estimate in the noncritical case for w_k crude form} and \eqref{eqn: improved estimates of w_k crude form},
\begin{equation*}
\begin{split}
&\;\|A^{1+s/2}w_{K+1}(t_{K+2})\|_{L^2}^2\\
\leq &\; C\int_{t_{K+1}}^{\infty} \|Af(u_*,u_*)\|_{L^2}^2\,d\tau\\
\leq &\; C\int_{t_{K+1}}^{\infty} \|u_*\|_{D(A)}^2(\|A^{s/2}u_*\|_{L^2}^2+\|A^{3/2}u_*\|_{L^2}^2)\,d\tau\\
\leq &\; C\|u_0\|_{D(A)}^2\left(1
+ \|A^{(3-s)/2}e^{-\delta\nu A^s}u_K\|_{L^2}^2
+\int_{t_{K+1}}^{\infty} \|A^{3/2}w_K\|_{L^2}^2\,d\tau
\right). 
\end{split}
\end{equation*}
Note that $r_{K+1}\geq 3$.
By \eqref{eqn: estimate of u_j and w_j} with $j = K$,
\begin{equation*}
\begin{split}
&\;\|A^{1+s/2}w_{K+1}(t_{K+2})\|_{L^2}^2\\
\leq &\; C\|u_0\|_{D(A)}^2\left(1
+ (\nu\delta)^{-\frac{3-r_K}{s}}\|A^{(r_K-s)/2}u_K\|_{L^2}^2\right)\\
&\;+C\|u_0\|_{D(A)}^2\int_{t_{K+1}}^{\infty} \|A^{r_{K+1}/2}w_K\|_{L^2}^2\,d\tau\\
\leq &\; C\|u_0\|_{D(A)}^2(\delta^{-\frac{1-s}{s}}+1)(\|u_0\|^2_{D(A)}+1)^{K+1}.
\end{split}
\end{equation*}
%
%
%
%
On the other hand, since \eqref{eqn: bounding norm of u_k} also holds for $j = K+1$,
\begin{equation*}
\begin{split}
&\;\|A^{1+s/2}e^{-\delta \nu A^s}u_{K+1}\|_{L^2}^2\\
\leq &\;C(\nu\delta)^{-\frac{(2+s)-(r_{K+1}-s)}{s}}\|A^{(r_{K+1}-s)/2}u_{K+1}\|_{L^2}^2\\
\leq &\; C(\delta^{-1}+1)(\|u_0\|^2_{D(A)}+1)^K\|u_0\|^2_{D(A)}.
\end{split}
\end{equation*}
%
Combining these two estimates, we obtain that
\begin{equation}
\begin{split}
&\;
\|A^{1+s/2}u_*(t_{K+2})\|_{L^2}^2\\
\leq &\;C(\delta^{-1}+1)(\|u_0\|^2_{D(A)}+1)^{K+1}\|u_0\|^2_{D(A)}\\
\leq &\;C(\varepsilon^{-1}+1)(\|u_0\|^2_{D(A)}+1)^{K+1}\|u_0\|^2_{D(A)},
\end{split}
\label{eqn: estimate of u_K+2}
\end{equation}
where $C = C(\alpha,  s, n, \nu, \Omega, \|u_0\|_{D(A^{1/2})},K)$.
Note that $K$ essentially depends on $s$.
By interpolation between \eqref{eqn: estimate of u_K+2} and $\|u_*(\varepsilon)\|_{D(A)}^2\leq C\|u_0\|^2_{D(A)}$, we know that for all $r\in[0,s/2]$,
\begin{equation*}
\|u_*(\varepsilon)\|_{D(A^{1+r})}^2\leq C(\varepsilon^{-\frac{2r}{s}}+1)\|u_0\|^2_{D(A)},
\end{equation*}
where $C = C(\alpha, s, n,\nu, \Omega, \|u_0\|_{D(A)})$.
In particular, as $\|u_0\|_{D(A)}\to 0$, $C$ converges to a universal constant depending on $\alpha$, $s$, $n$, $\nu$ and $\Omega$.
Since $\varepsilon>0$ is arbitrary, \eqref{eqn: estimate of the improved regularity of u_*} is proved.
\end{proof}
\end{theorem}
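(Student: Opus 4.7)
The plan is to prove this by a bootstrap argument that exploits the mismatch between the regularity cost of the nonlinearity $f(u_1,u_2)$ and the smoothing effect of the fractional semigroup $e^{-\nu t A^s}$. Lemma \ref{lemma: unified spatial estimate of f} shows that $f(u_*,u_*)$ loses essentially one spatial derivative relative to $u_*$ (modulo the globally bounded factor $\|u_*\|_{D(A)}$ from Theorem \ref{thm: global existence and uniqueness}), while the backbone equation $\partial_t u_* + \nu A^s u_* = f(u_*,u_*)$ recovers $2s$ derivatives through the Duhamel formula. Each iteration therefore produces a net gain of $2s-1 > 0$, which is exactly why $s > 1/2$ is called non-critical: a finite number of bootstrap steps will carry $u_*$ from $D(A)$ up to $D(A^{1+s/2})$ at any positive time.

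Fix $\varepsilon > 0$. I would set $K = \lfloor(1-s)/(2s-1)\rfloor$, $\delta = \varepsilon/(K+2)$, $t_k = k\delta$, and define a regularity ladder $r_k = 2+s+(2s-1)k$ chosen so that $r_K \leq 3 < r_{K+1}$. On each slab $[t_k,t_{k+1}]$, decompose $u_*(t) = e^{-(t-t_k)\nu A^s}u_k + w_k(t)$, where $u_k := u_*(t_k)$ and $w_k$ solves
\begin{equation*}
\partial_t w_k + \nu A^s w_k = f(u_*,u_*), \qquad w_k(t_k) = 0.
\end{equation*}
Testing against $A^{r_k - 1 + s} w_k$, applying Lemma \ref{lemma: unified spatial estimate of f} with $r = r_k - 1$, and using the global bounds from Theorem \ref{thm: global existence and uniqueness} and \eqref{eqn: L^2 energy estimate} to absorb $\|u_*\|_{D(A)}$ and the low-frequency part, yields
\begin{equation*}
\|A^{(r_{k+1}-s)/2}w_k(t)\|_{L^2}^2 + \int_{t_k}^t \|A^{r_{k+1}/2}w_k\|_{L^2}^2\,d\tau \leq C\|u_0\|_{D(A)}^2\Bigl(1 + \int_{t_k}^t \|A^{r_k/2}u_*\|_{L^2}^2\,d\tau\Bigr).
\end{equation*}

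The induction is driven by the complementary semigroup smoothing $\|A^\beta e^{-\delta \nu A^s}v\|_{L^2} \leq C(\nu\delta)^{-(\beta-\gamma)/s}\|A^\gamma v\|_{L^2}$, applied to $u_k = e^{-\delta\nu A^s}u_{k-1} + w_{k-1}(t_k)$. At level $k$, the semigroup term on $u_{k-1}$ gains exactly $2s-1$ orders at a cost $\delta^{-(2s-1)/s}$, while the remainder is bounded by the previous step's $w_{k-1}$ estimate, giving a joint bound of the schematic form
\begin{equation*}
\|A^{(r_k - s)/2}u_k\|_{L^2}^2 + \|A^{(r_{k+1}-s)/2}w_k(t_{k+1})\|_{L^2}^2 + \int_{t_k}^\infty \|A^{r_{k+1}/2}w_k\|_{L^2}^2\,d\tau \leq C(\delta^{-k(2s-1)/s}+1)(\|u_0\|_{D(A)}^2 + 1)^k \|u_0\|_{D(A)}^2.
\end{equation*}
After $K$ such steps the ladder has overshot regularity $3$. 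One final, slightly different, Duhamel step on $[t_{K+1},t_{K+2}]$ then lands at $D(A^{1+s/2})$, producing
\begin{equation*}
\|u_*(\varepsilon)\|_{D(A^{1+s/2})}^2 \leq C(\varepsilon^{-1}+1)(\|u_0\|_{D(A)}^2+1)^{K+1}\|u_0\|_{D(A)}^2,
\end{equation*}
and interpolation with $\|u_*(\varepsilon)\|_{D(A)}\leq C\|u_0\|_{D(A)}$ yields \eqref{eqn: estimate of the improved regularity of u_*} for every $r\in[0,s/2]$.

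The main obstacle I anticipate is the bookkeeping: arranging the ladder $\{r_k\}$ so that the final step hits $D(A^{1+s/2})$ exactly (neither above nor below) and tracking the powers of $\delta$ through the iteration so that they combine into a clean $\varepsilon^{-1}$ on top. A secondary subtlety is that at each step the nonlinearity $f$ must be estimated via the semigroup part of $u_*$ (where the smoothing constant $\delta^{-(2s-1)/s}$ appears) \emph{plus} the $w_{k-1}$-part (where integrability in time is already known from the previous bootstrap estimate); the two contributions have to be split consistently from one level to the next. The case $s=1/2$ is specifically excluded because then each step gains zero derivative, making this scheme genuinely critical.
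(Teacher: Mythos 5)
Your proposal follows essentially the same route as the paper: the identical ladder $r_k = 2+s+(2s-1)k$ with $K=\lfloor(1-s)/(2s-1)\rfloor$, the same slab decomposition $u_* = e^{-(t-t_k)\nu A^s}u_k + w_k$ with energy estimates on $w_k$ via Lemma \ref{lemma: unified spatial estimate of f} and semigroup smoothing on $u_{k-1}$ at cost $\delta^{-(2s-1)/s}$ per step, the same induction, final Duhamel step landing at $D(A^{1+s/2})$, and interpolation. The argument is correct and matches the paper's proof in all essentials.
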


\subsection{Critical case: $s = 1/2$}
Now we consider the case $(n,s) = (2,1/2)$.
It is called critical since no easy bootstrapping argument  can be applied as before. 
In what follows, we shall prove, in the fashion of re-constructing the solution, that $u_*$ has local H\"{o}lder continuity in time away from $t=0$ as a function valued in $D(A^{1+s/2})$; while the H\"{o}lder norm admits a singularity at $t = 0$ with certain growth rate as $t\rightarrow 0^+$.
This idea comes from the earlier studies of regularity of $L^p$-solution of the Navier-Stokes equation and semilinear parabolic equations \cite{fujita1964navier,giga1985solutions,giga1986solutions}.
To be more precise, we introduce the following definition.

\begin{definition}\label{def: define a more refined set of functions}
Fix $T\in(0,1]$ and let $w \in C_{[0,T]}(D(A))\cap L^2_T (D(A^{1+s/2}))$.
With $R>0$ and $\beta\in (0,1/2)$, we say $w\in B_{R,T}^\beta$ if and only if
\begin{enumerate}
\item $\|w(t)\|_{D(A)}\leq R$ for all $t\in [0,T]$;
\item $\|A^{s/2}w(t)\|_{D(A)}\leq R t^{-1/2}$ for all $t\in (0,T]$;
\item For all $0< t\leq t+h\leq T$,
\begin{align}
\|w(t+h)-w(t)\|_{D(A)}\leq &\;h^\beta t^{-\beta}R,\label{eqn: holder continuity of D(A) norm}\\
\|A^{s/2}(w(t+h)-w(t))\|_{D(A)}\leq &\;h^\beta t^{-(\beta+1/2)}R.\label{eqn: holder continuity of D(A^1+s/2) norm}
\end{align}
\end{enumerate}
\end{definition}

In fact, homogeneous solutions given by the semigroup $\{e^{-t\nu A^s}\}_{t\geq 0}$ is in this type of sets.
\begin{lemma}\label{lemma: semigroup solution is in that class}
For all $w_0\in D(A)$, $w(t) = e^{-t\nu A^s}w_0\in B_{R,T}^\beta$ with $R = C(\nu,\beta)\|w_0\|_{D(A)}$.
\begin{proof}
It is trivial that $\|w(t)\|_{D(A)}\leq \|w_0\|_{D(A)}$ and
\begin{equation*}
\begin{split}
&\;\|A^{s/2}w(t)\|_{D(A)}\\
\leq &\;\|e^{-t\nu A^s}A^{s/2}w_0\|_{L^2}+\|A^{s/2}e^{-t\nu A^s}Aw_0\|_{L^2}\\
\leq &\;\|A^{s/2}w_0\|_{L^2}+C(\nu) t^{-1/2}\|Aw_0\|_{L^2}\\
\leq &\;C(\nu) t^{-1/2}\|w_0\|_{D(A)}.
\end{split}
\end{equation*}
In the last inequality, we used the fact that $t \leq 1$.

To prove \eqref{eqn: holder continuity of D(A) norm}, we derive that
\begin{equation*}
\begin{split}
&\;\|e^{-(t+h)\nu A^s}w_0 - e^{-t\nu A^s}w_0\|_{D(A)}\\
= &\;\left\|\int_t^{t+h} \nu A^s e^{-\tau\nu A^s}w_0\,d\tau\right\|_{D(A)}
\leq C\int_t^{t+h} \tau^{-1}\,d\tau\cdot \|w_0\|_{D(A)}\\
\leq &\; C\ln\left(1+\frac{h}{t}\right)\|w_0\|_{D(A)}
\leq C(\beta)h^\beta t^{-\beta}\|w_0\|_{D(A)}.
\end{split}
\end{equation*}
Similarly,
\begin{equation*}
\begin{split}
&\;\|A^{s/2}(e^{-(t+h)\nu A^s}w_0 - e^{-t\nu A^s}w_0)\|_{D(A)}\\
= &\;\left\|\int_t^{t+h} \nu A^{3s/2} e^{-\tau\nu A^s}w_0\,d\tau\right\|_{D(A)}
\leq  C(\nu)\int_t^{t+h} \tau^{-3/2}\,d\tau\cdot \|w_0\|_{D(A)}\\
\leq &\; C(\nu)t^{-1/2}\left(1-\sqrt{\frac{t}{t+h}}\right)\|w_0\|_{D(A)}
\leq C(\nu,\beta)\frac{h^\beta}{t^{\beta+1/2}}\|w_0\|_{D(A)}.
\end{split}
\end{equation*}
In the last inequality, we used the fact that $1-x^{-1/2}\leq (x-1)^\beta$ for all $x\geq 1$.
Indeed, it is trivial when $x\geq 2$; for $x\in [1,2]$, $1-x^{-1/2}\leq 1-x^{-1}\leq x-1 \leq (x-1)^\beta$.
This completes the proof.
\end{proof}
\end{lemma}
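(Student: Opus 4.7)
The plan is to verify each of the three defining conditions of $B_{R,T}^\beta$ in turn, relying only on two ingredients: the standard analyticity/smoothing estimate
$$
\|A^\sigma e^{-t\nu A^s}\|_{L^2\to L^2}\le C(\nu,\sigma)\,t^{-\sigma/s}\qquad(\sigma\ge 0),
$$
and the fact that $e^{-t\nu A^s}$ commutes with every power of $A$ (both are functional-calculus expressions in the self-adjoint operator $A$).

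Conditions (i) and (ii) are essentially one-line computations. For (i), writing $Aw(t)=e^{-t\nu A^s}Aw_0$ and using contractivity of $e^{-t\nu A^s}$ on $L^2$ gives $\|w(t)\|_{D(A)}\le\|w_0\|_{D(A)}$. For (ii), I would split $\|A^{s/2}w(t)\|_{D(A)}^2=\|A^{s/2}w(t)\|_{L^2}^2+\|A^{1+s/2}w(t)\|_{L^2}^2$. The first summand is controlled by $\|A^{s/2}w_0\|_{L^2}\le C\|w_0\|_{D(A)}$ via interpolation and contractivity; the second equals $\|A^{s/2}e^{-t\nu A^s}(Aw_0)\|_{L^2}\le C(\nu)t^{-1/2}\|w_0\|_{D(A)}$ by the smoothing bound with $\sigma=s/2$. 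The normalization $t\le T\le 1$ absorbs the first summand into the second.

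The two Hölder estimates in (iii) both hinge on the fundamental-theorem identity
$$
e^{-(t+h)\nu A^s}w_0 - e^{-t\nu A^s}w_0 \;=\; -\int_t^{t+h}\nu A^s e^{-\tau\nu A^s}w_0\,d\tau.
$$
For \eqref{eqn: holder continuity of D(A) norm}, I take the $D(A)$-norm inside, commute an $A$ past the semigroup, and apply the smoothing bound with $\sigma=s$ to control the integrand by $C\tau^{-1}\|w_0\|_{D(A)}$; the integral equals $C\|w_0\|_{D(A)}\ln(1+h/t)$, which I convert to $C(\beta)(h/t)^\beta\|w_0\|_{D(A)}$ via the elementary inequality $\ln(1+x)\le C(\beta)x^\beta$ valid for all $x\ge 0$ and $\beta\in(0,1)$. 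For \eqref{eqn: holder continuity of D(A^1+s/2) norm}, the same identity with an $A^{s/2}$ prefactor produces an integrand of order $\tau^{-3/2}\|w_0\|_{D(A)}$, whose integral is $2\|w_0\|_{D(A)}\bigl(t^{-1/2}-(t+h)^{-1/2}\bigr)=2\|w_0\|_{D(A)}t^{-1/2}\bigl(1-(1+h/t)^{-1/2}\bigr)$; the real-variable bound $1-(1+x)^{-1/2}\le C(\beta)x^\beta$, checked by splitting at $x=1$, delivers the required $h^\beta t^{-\beta-1/2}$ form.

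The main obstacle is essentially cosmetic: no deep analytic difficulty arises, since all the work is done by analytic-semigroup smoothing together with two elementary scalar inequalities. The only care needed is in matching powers of $\tau$ so that the Hölder weight in $t$ emerges with the precise exponents $-\beta$ and $-\beta-1/2$; the upper bound $\beta<1/2$ is not strictly required for the bounds themselves, but is built into Definition~\ref{def: define a more refined set of functions} in anticipation of the subsequent contraction-mapping construction of $u_*$ in the critical case.
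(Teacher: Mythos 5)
Your proposal is correct and follows essentially the same route as the paper: contractivity and the smoothing bound $\|A^\sigma e^{-t\nu A^s}\|_{\mathcal{L}(L^2)}\leq C t^{-\sigma/s}$ for conditions (i)--(ii), and the fundamental-theorem identity for the two H\"{o}lder estimates, reduced to the same two elementary scalar inequalities ($\ln(1+x)\leq C(\beta)x^\beta$ and $1-(1+x)^{-1/2}\leq C(\beta)x^\beta$). No gaps; the only cosmetic difference is that you carry the (harmless) minus sign in the integral identity explicitly.
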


The following lemma is the key to show existence of the solution in the type of sets $B_{R,T}^\beta$.

\begin{lemma}\label{lemma: estimate of nonlinear term in the critical case}
Fix $T\in(0,1]$ and $\beta\in (0,1/2)$.
For $w_1\in B_{R_1,T}^\beta$ and $w_2\in B_{R_2,T}^\beta$, let
\begin{equation*}
v[w_1,w_2](t) = \int_0^t e^{-(t-\tau)\nu A^s}f(w_1,w_2)\,d\tau,\quad t\in[0,T].
\end{equation*}
Then $v[w_1,w_2](t)\in B_{CR_1R_2,T}^\beta$, where $C = C(\alpha, \nu, \Omega, \beta)$.
\begin{proof}
It is helpful to first derive some estimates for $f(w_1, w_2)$.
By Lemma \ref{lemma: unified spatial estimate of f} with $r = 2-s$, 
\begin{equation}
\|f(w_1,w_2)(t)\|_{D(A^{1-s/2})} 
\leq C\|w_1\|_{D(A)}\|A^{s/2}w_2\|_{D(A)}\leq CR_1 R_2 t^{-1/2},
\label{eqn: norm of f used in the critical case}
\end{equation}
where $C = C(\alpha, \Omega)$.
In addition, 
for all $t\geq \tau>0$,
\begin{equation}
\begin{split}
&\;\|A^{1-s/2}(f(w_1,w_2)(\tau) - f(w_1,w_2)(t))\|_{L^2}\\
\leq &\;\|f(w_1(\tau)-w_1(t),w_2(\tau))\|_{D(A^{1-s/2})}+\|f(w_1(t),w_2(\tau)-w_2(t))\|_{D(A^{1-s/2})}\\
\leq &\;C\|w_1(\tau)-w_1(t)\|_{D(A)}\|A^{s/2}w_2(\tau)\|_{D(A)}\\
&\;+C\|w_1(t)\|_{D(A)}\|A^{s/2}(w_2(\tau)-w_2(t))\|_{D(A)}\\
\leq &\;C\frac{(t-\tau)^\beta}{\tau^{\beta+1/2}}R_1R_2,
\end{split}
\label{eqn: time Holder norm of f used in the critical case}
\end{equation}
where $C= C(\alpha, \Omega)$.

For brevity, we write $v[w_1,w_2](t)$ as $v(t)$ in the following.
\setcounter{step}{0}
\begin{step}
We start from $\|v(t)\|_{D(A)}$.
Thanks to \eqref{eqn: norm of f used in the critical case},
\begin{equation*}
\begin{split}
\|v(t)\|_{D(A)} \leq &\;\int_0^t (\|e^{-(t-\tau)\nu A^s}\|_{\mathcal{L}(L^2)}+\|A^{s/2}e^{-(t-\tau)\nu A^s}\|_{\mathcal{L}(L^2)})\|f(w_1,w_2)\|_{D(A^{1-s/2})}\,d\tau\\
\leq &\;C\int_0^t (1+(t-\tau)^{-1/2})R_1R_2 \tau^{-1/2}\,d\tau\leq CR_1 R_2,
\end{split}
\end{equation*}
where $C = C(\alpha, \nu, \Omega)$ and $\|\cdot\|_{\mathcal{L}(L^2)}$ denotes the operator norm from $L^2(\Omega)$ to itself.
Here we use the fact that $(t-\tau)<T\leq 1$.
\end{step}

\begin{step}
We make estimate for $\|A^{s/2}v(t)\|_{D(A)}$.
Thanks to \eqref{eqn: norm of f used in the critical case} and \eqref{eqn: time Holder norm of f used in the critical case},
\begin{equation*}
\begin{split}
&\;\|A^{s/2}v(t)\|_{D(A)}\\
\leq &\;\|A^{s/2}v(t)\|_{L^2}+\left\|\int_0^t A^s e^{-(t-\tau)\nu A^s} A^{1-s/2}f(w_1,w_2)(t)\,d\tau\right\|_{L^2}\\
&\;+ \left\|\int_0^t A^s e^{-(t-\tau)\nu A^s} [A^{1-s/2}f(w_1,w_2)(\tau) - A^{1-s/2}f(w_1,w_2)(t)]\,d\tau\right\|_{L^2}\\
\leq &\; C\|v(t)\|_{D(A)}+\nu^{-1}\left\|\int_0^t \left.\frac{d}{d\eta}\right|_{\eta = \tau} \left(e^{-(t-\eta)\nu A^s}\right) A^{1-s/2}f(w_1,w_2)(t)\,d\tau\right\|_{L^2}\\
&\;+\int_0^t \left\|A^s e^{-(t-\tau)\nu A^s}\right\|_{\mathcal{L}(L^2)}\|A^{1-s/2}(f(w_1,w_2)(\tau) - f(w_1,w_2)(t))\|_{L^2}\,d\tau\\
\leq &\; CR_1R_2+\nu^{-1}\|(Id - e^{-t\nu A^s}) A^{1-s/2}f(w_1,w_2)(t)\|_{L^2}\\
&\;+C\nu^{-1}\int_0^t (t-\tau)^{-1}\frac{(t-\tau)^\beta}{\tau^{\beta+1/2}}R_1R_2\,d\tau\\
\leq &\; Ct^{-1/2}R_1R_2,
\end{split}
\end{equation*}
where $C = C(\alpha, \nu, \Omega, \beta)$.
\end{step}

\begin{step}\label{step: time holder continuity of D(A) norm}
We check \eqref{eqn: holder continuity of D(A) norm} for $v$.
For all $0<t\leq t+h\leq T$, by \eqref{eqn: norm of f used in the critical case} and \eqref{eqn: time Holder norm of f used in the critical case},
\begin{equation}
\begin{split}
&\;\|v(t+h)-v(t)\|_{D(A)}\\
=&\;\left\|\int_0^{t+h} e^{-(t+h-\tau)\nu A^s}f(w_1,w_2)(\tau)\,d\tau-\int_0^t e^{-(t-\tau)\nu A^s}f(w_1,w_2)(\tau)\,d\tau\right\|_{D(A)}\\
\leq &\;\left\|\int_0^h e^{-(t+h-\tau)\nu A^s} f(w_1,w_2)(\tau)\,d\tau\right\|_{D(A)}\\
&\;+\left\|\int_0^t e^{-(t-\tau)\nu A^s} (f(w_1,w_2)(\tau+h)-f(w_1,w_2)(\tau))\,d\tau\right\|_{D(A)}\\
\leq &\;\int_0^h \|f(w_1,w_2)(\tau)\|_{L^2}\,d\tau\\
&\;+\int_0^h \|A^{s/2}e^{-(t+h-\tau)\nu A^s}\|_{\mathcal{L}(L^2)} \|A^{1-s/2}f(w_1,w_2)(\tau)\|_{L^2}\,d\tau\\
&\;+\int_0^t \|(f(w_1,w_2)(\tau+h)-f(w_1,w_2)(\tau))\|_{L^2}\,d\tau\\
&\;+\int_0^t \|A^{s/2}e^{-(t-\tau)\nu A^s}\|_{\mathcal{L}(L^2)} \|A^{1-s/2}(f(w_1,w_2)(\tau+h)-f(w_1,w_2)(\tau))\|_{L^2}\,d\tau\\
\leq &\;C\int_0^h (t+h-\tau)^{-1/2}R_1R_2 \tau^{-1/2}\,d\tau+C\int_0^t (t-\tau)^{-1/2}R_1R_2 \frac{h^\beta}{\tau^{\beta+1/2}}\,d\tau\\
\leq &\;CR_1R_2\int_0^h (t+h-\tau)^{-1/2} \tau^{-1/2}\,d\tau+Ch^\beta t^{-\beta} R_1R_2,
\end{split}
\label{eqn: derivation in step 3 in the critical case}
\end{equation}
where $C = C(\alpha, \nu, \Omega, \beta)$.
If $t\geq h$,
\begin{equation*}
\int_0^h (t+h-\tau)^{-1/2} \tau^{-1/2}\,d\tau\leq \int_0^h t^{-1/2} \tau^{-1/2}\,d\tau \leq Ch^{1/2}t^{-1/2} \leq Ch^\beta t^{-\beta}.
\end{equation*}
Otherwise, if $t < h$,
\begin{equation*}
\int_0^h (t+h-\tau)^{-1/2} \tau^{-1/2}\,d\tau\leq \int_0^{t+h} (t+h-\tau)^{-1/2} \tau^{-1/2}\,d\tau\leq C\leq Ch^\beta t^{-\beta}.
\end{equation*}
Combining the above estimates with \eqref{eqn: derivation in step 3 in the critical case}, we find that
\begin{equation*}
\|v(t+h)-v(t)\|_{D(A)}\leq Ch^\beta t^{-\beta} R_1R_2,
\end{equation*}
which is \eqref{eqn: holder continuity of D(A) norm}.

\end{step}

\begin{step}
We check \eqref{eqn: holder continuity of D(A^1+s/2) norm} for $v$.
Consider $\|v(t+h)-v(t)\|_{D(A^{1+s/2})}$ with $0<t\leq t+h\leq T$.
First we assume that $h\leq t/2$.
It is known that $\|A^{s/2}(v(t+h)-v(t))\|_{D(A)}\leq \|A^{s/2}(v(t+h)-v(t))\|_{L^2}+\|A^{1+s/2}(v(t+h)-v(t))\|_{L^2}$.
We focus on the second term as the first term can be handled using Step \ref{step: time holder continuity of D(A) norm}.
We calculate that
\begin{equation}
\begin{split}
&\;A^{1+s/2}(v(t+h)-v(t))\\
=&\;\int_0^{t+h}A^s e^{-(t+h-\tau)\nu A^s}A^{1-s/2}f(w_1,w_2)(\tau)\,d\tau\\
&\;-\int_0^{t}A^s e^{-(t-\tau)\nu A^s}A^{1-s/2}f(w_1,w_2)(\tau)\,d\tau\\
=
%
%
&\;\int_0^{t-h}A^s (e^{-h\nu A^s}-Id)e^{-(t-\tau)\nu A^s} A^{1-s/2}(f(w_1,w_2)(\tau)-f(w_1,w_2)(t))\,d\tau\\
&\;+\int_{t-h}^{t+h}A^s e^{-(t+h-\tau)\nu A^s}A^{1-s/2}(f(w_1,w_2)(\tau)-f(w_1,w_2)(t+h))\,d\tau\\
&\;-\int_{t-h}^{t}A^s e^{-(t-\tau)\nu A^s}A^{1-s/2}(f(w_1,w_2)(\tau)-f(w_1,w_2)(t))\,d\tau\\
&\;+\int_0^{t-h}A^s e^{-(t+h-\tau)\nu A^s}A^{1-s/2}f(w_1,w_2)(t)\,d\tau\\
&\;+\int_{t-h}^{t+h}A^s e^{-(t+h-\tau)\nu A^s}A^{1-s/2}f(w_1,w_2)(t+h)\,d\tau\\
&\;-\int_0^t A^s e^{-(t-\tau)\nu A^s}A^{1-s/2}f(w_1,w_2)(t)\,d\tau\\
=: &\; I_1+I_2+I_3+I_4+I_5+I_6.
\end{split}
\label{eqn: split time difference into six terms}
\end{equation}

Take $\beta'= (\beta+\frac{1}{2})/2 \in(\beta,1/2)$.
\begin{equation*}
\begin{split}
\|I_1\|_{L^2}
\leq &\;\int_0^{t-h}\|A^{-\beta's} (e^{-h\nu A^s}-Id)\|_{\mathcal{L}(L^2)}\|A^{(1+\beta')s}e^{-(t-\tau)\nu A^s}\|_{\mathcal{L}(L^2)} \\
&\;\qquad\qquad\cdot \|A^{1-s/2}(f(w_1,w_2)(\tau)-f(w_1,w_2)(t))\|_{L^2}\,d\tau.
\end{split}
\end{equation*}
Since $\|A^{-\beta's} (e^{-h\nu A^s}-Id)\|_{\mathcal{L}(L^2)}\leq C(\nu,\beta')h^{\beta'}$ \cite{fujita1964navier} and $h\leq t/2$ by assumption,
\begin{equation}
\begin{split}
\|I_1\|_{L^2}\leq &\;C\int_0^{t-h}h^{\beta'}(t-\tau)^{-(1+\beta')}\frac{(t-\tau)^\beta}{\tau^{\beta+1/2}}R_1R_2\,d\tau\\
= &\;Ch^{\beta'}R_1R_2\int_0^{t/2}+\int_{t/2}^{t-h}(t-\tau)^{-(1+\beta'-\beta)}\tau^{-(\beta+1/2)}\,d\tau\\
\leq &\;CR_1R_2h^{\beta}t^{-(\beta+1/2)},
\end{split}
\label{eqn: bounding I_1}
\end{equation}
where $C = C(\alpha, \nu, \Omega, \beta)$.
In the last step, we used the fact that $h\leq t$ and $\beta'\geq\beta$.

By \eqref{eqn: time Holder norm of f used in the critical case},
\begin{equation}
\|I_2\|_{L^2} 
\leq  C\int_{t-h}^{t+h} (t+h-\tau)^{-1}\cdot \frac{(t+h-\tau)^{\beta}}{\tau^{-(\beta+1/2)}}R_1R_2 \leq CR_1R_2h^{\beta}t^{\beta+1/2},
\label{eqn: bounding I_2}
\end{equation}
and similarly,
\begin{equation}
\|I_3\|_{L^2} 
\leq C\int_{t-h}^{t} (t-\tau)^{-1}\frac{(t-\tau)^\beta}{\tau^{\beta+1/2}}R_1R_2 \leq CR_1R_2h^{\beta}t^{-(\beta+1/2)},
\label{eqn: bounding I_3}
\end{equation}
where $C = C(\alpha, \nu, \Omega, \beta)$.

%
%

The rest of the terms in \eqref{eqn: split time difference into six terms} can be handled as follows.
\begin{equation*}
\begin{split}
&\;\nu(I_4+I_5+I_6)\\
= &\;(e^{-2h\nu A^s}-e^{-(t+h)\nu A^s})A^{1-s/2}f(w_1,w_2)(t)\\
&\;+(Id-e^{-2h\nu A^s})A^{1-s/2}(f(w_1,w_2)(t+h)-f(w_1,w_2)(t))\\
&\;+(Id-e^{-2h\nu A^s})A^{1-s/2}f(w_1,w_2)(t)\\
&\;-(Id-e^{-t\nu A^s})A^{1-s/2}f(w_1,w_2)(t)\\
= &\;A^{-\beta s}(Id - e^{-h\nu A^s})A^{\beta s}e^{-t\nu A^s}A^{1-s/2}f(w_1,w_2)(t)\\
&\;+(Id-e^{-2h\nu A^s})A^{1-s/2}(f(w_1,w_2)(t+h)-f(w_1,w_2)(t)).
\end{split}
\end{equation*}
By virtue of \eqref{eqn: norm of f used in the critical case} and \eqref{eqn: time Holder norm of f used in the critical case},
\begin{equation}
\begin{split}
&\;\|I_4+I_5+I_6\|_{L^2}\\
\leq &\;C\|A^{-\beta s}(Id - e^{-h\nu A^s})\|_{\mathcal{L}(L^2)}\|A^{\beta s}e^{-t\nu A^s}\|_{\mathcal{L}(L^2)}\|A^{1-s/2}f(w_1,w_2)(t)\|_{L^2}\\
&\;+C\|(Id-e^{-2h\nu A^s})\|_{\mathcal{L}(L^2)}\|A^{1-s/2}(f(w_1,w_2)(t+h)-f(w_1,w_2)(t))\|_{L^2}\\
\leq &\;Ch^\beta t^{-(\beta+1/2)}R_1R_2,
\end{split}
\label{eqn: bounding I_456}
\end{equation}
where $C = C(\alpha, \nu, \Omega, \beta)$.
Combining \eqref{eqn: bounding I_1}-
\eqref{eqn: bounding I_456}, and
\begin{equation*}
\|A^{s/2}(v(t+h)-v(t))\|_{L^2} \leq Ch^\beta t^{-\beta} R_1R_2 \leq Ch^\beta t^{-(\beta+1/2)} R_1R_2,
\end{equation*}
we establish \eqref{eqn: holder continuity of D(A^1+s/2) norm} for $v$ provided that $h\leq t/2$.

If $h>t/2$, there exist $N\in \mathbb{N}_+$ and $1+\kappa \in (\sqrt{3/2},3/2]$, such that $t+h = t(1+\kappa)^N$.
In fact, it suffices to consider $N = 1,2,2^2,\cdots$, and there will be exactly one such $N$ satisfying the above condition; $\kappa$ will follow from the choice of $N$.
With abuse of notations, let $t_j = t(1+\kappa)^j$ for $j = 0,\cdots, N$.
Then by \eqref{eqn: holder continuity of D(A^1+s/2) norm} for the case $h\leq t/2$,
\begin{equation*}
\begin{split}
&\;\|A^{s/2}(v(t_N) - v(t_0))\|_{D(A)} \\
\leq &\;\sum_{j = 1}^N \|A^{s/2}(v(t_j) - v(t_{j-1}))\|_{D(A)}\\
\leq &\;CR_1R_2 \sum_{j=1}^N(t_j-t_{j-1})^\beta t_{j-1}^{-(\beta+1/2)}\\
\leq &\;CR_1R_2 t^{-1/2}\kappa^\beta \sum_{j = 1}^N(1+\kappa)^{-(j-1)/2}\\
\leq &\;
CR_1R_2 h^\beta t^{-(\beta+1/2)},
\end{split}
\end{equation*}
where $C = C(\alpha, \nu, \Omega, \beta)$.
The last inequality follows from $h\geq t/2$.
\end{step}

This completes the proof.
\end{proof}
\end{lemma}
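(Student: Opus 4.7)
The plan is to verify the four defining conditions for $v = v[w_1,w_2]$ to lie in $B^\beta_{CR_1R_2,T}$. The two preliminary ingredients I would establish first are spatial and temporal estimates on the forcing term $f(w_1, w_2)$. For the spatial estimate, Lemma \ref{lemma: unified spatial estimate of f} applied with $r = 2-s > n/2 = 1$, combined with conditions (1)--(2) of $B^\beta$, yields
$$\|f(w_1,w_2)(\tau)\|_{D(A^{1-s/2})} \le C\|w_1(\tau)\|_{D(A)}\|A^{s/2}w_2(\tau)\|_{D(A)} \le CR_1R_2\tau^{-1/2}.$$
For the temporal estimate, I would use the bilinearity of $f$ to split $f(\tau)-f(t) = f(w_1(\tau)-w_1(t),w_2(\tau)) + f(w_1(t),w_2(\tau)-w_2(t))$ and invoke condition (3) for $w_1, w_2$ to deduce
$$\|A^{1-s/2}(f(\tau)-f(t))\|_{L^2} \le CR_1R_2\,(t-\tau)^\beta\tau^{-(\beta+1/2)}.$$

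With these in hand, condition (1) $\|v(t)\|_{D(A)} \le CR_1R_2$ follows by splitting $\|v\|_{D(A)} \le \|v\|_{L^2} + \|A^{s/2}\cdot A^{s/2}v\|_{L^2}$, using $\|A^{s/2}e^{-(t-\tau)\nu A^s}\|_{\mathcal{L}(L^2)}\le C(t-\tau)^{-1/2}$, and evaluating $\int_0^t(t-\tau)^{-1/2}\tau^{-1/2}\,d\tau = B(1/2,1/2)$. For condition (2), the naive estimate fails because $A^s$ is not time-integrable; the standard fix is to add and subtract $f(t)$ inside the integral. The $f(\tau)-f(t)$ part is controlled by the time-Hölder estimate and the integral $\int_0^t(t-\tau)^{-1+\beta}\tau^{-(\beta+1/2)}\,d\tau \sim t^{-1/2}$; the $f(t)$ part is integrated exactly using $\frac{d}{d\tau}e^{-(t-\tau)\nu A^s} = \nu A^s e^{-(t-\tau)\nu A^s}$, producing $\nu^{-1}(\mathrm{Id}-e^{-t\nu A^s})A^{1-s/2}f(t)$, which is bounded by $CR_1R_2 t^{-1/2}$.

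For condition (3), I would write $v(t+h)-v(t) = \int_0^h e^{-(t+h-\tau)\nu A^s}f(\tau)\,d\tau + \int_0^t e^{-(t-\tau)\nu A^s}(f(\tau+h)-f(\tau))\,d\tau$ and bound each piece using the two preliminary estimates. The key arithmetic is to show $\int_0^h(t+h-\tau)^{-1/2}\tau^{-1/2}\,d\tau \le Ch^\beta t^{-\beta}$, which I would verify by separately treating $t\ge h$ (where the first factor is bounded by $t^{-1/2}$) and $t<h$ (where the integral is a bounded Beta function and $h^\beta t^{-\beta}\ge 1$).

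The main obstacle is condition (4), the Hölder estimate in $D(A^{1+s/2})$. For $h\le t/2$ I would decompose $A^{1+s/2}(v(t+h)-v(t))$ into six terms $I_1,\dots,I_6$ of two flavors: integrals involving $f(\tau)-f(\bullet)$ (controlled by the time-Hölder estimate, with careful splitting of $[0,t-h]$ into $[0,t/2]$ and $[t/2,t-h]$) and integrals of the frozen $f(t)$ or $f(t+h)$ (handled via the identity $e^{-(t-\tau)\nu A^s}$ integrates to $\nu^{-1}(\mathrm{Id}-e^{-(\cdot)\nu A^s})$, then using the bound $\|A^{-\beta's}(\mathrm{Id}-e^{-h\nu A^s})\|_{\mathcal{L}(L^2)}\le C h^{\beta'}$ for a suitable $\beta'\in(\beta,1/2)$). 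For $h>t/2$, the direct approach breaks down, and I would instead chain the $h\le t/2$ estimate along a geometric sequence $t_j = t(1+\kappa)^j$ with $1+\kappa\in(\sqrt{3/2},3/2]$ chosen so that $t+h = t_N$ for some $N\in\mathbb{N}_+$; the telescoping sum $\sum_j(t_j-t_{j-1})^\beta t_{j-1}^{-(\beta+1/2)}$ converges geometrically to the desired bound $CR_1R_2 h^\beta t^{-(\beta+1/2)}$. This last step—reducing large-$h$ Hölder control to a geometric iteration of small-$h$ control—is where I expect the greatest care to be needed.
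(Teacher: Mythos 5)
Your proposal is correct and follows essentially the same route as the paper's proof: the same two preliminary estimates on $f(w_1,w_2)$, the same add-and-subtract of the frozen $f(t)$ for the $D(A^{1+s/2})$ bound, the same six-term decomposition with the $\|A^{-\beta's}(\mathrm{Id}-e^{-h\nu A^s})\|_{\mathcal{L}(L^2)}\le Ch^{\beta'}$ smoothing bound for $h\le t/2$, and the same geometric chaining along $t_j=t(1+\kappa)^j$ for $h>t/2$. No gaps.
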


With Lemma \ref{lemma: estimate of nonlinear term in the critical case}, we have the following result in the critical case which re-constructs the solution obtained in Proposition \ref{prop: local existence and uniqueness}, yet with refined characterization of its regularity.
However, we shall additionally need the initial data to be small.
\begin{proposition}[Local well-posedness in a refined class]\label{prop: local existence and uniqueness in critical case}
Assume \eqref{eqn: assumption on the regularity of the domain} and let $(n,s) = (2,1/2)$.
For given $\beta\in (0,1/2)$, there exists an $\varepsilon = \varepsilon(\alpha, \nu, \Omega, \beta)\in (0,1]$, such that if $u_0\in D(A)$ with $\|u_0\|_{D(A)}\leq \varepsilon$, then the unique local solution $u$ obtained in Proposition \ref{prop: local existence and uniqueness} satisfies
$u\in B^\beta_{C\|u_0\|_{D(A)}, 1}$ with $C = C(\alpha,\nu,\Omega,\beta)$.
\begin{proof}
Instead of proving the regularity of the local solution $u$ directly, the proof uses a fixed-point iteration to re-construct the solution in the refined class.

Let $M = \|u_0\|_{D(A)}<+\infty$.
Take $T=1$ and fix $\beta\in(0,1/2)$, we denote
\begin{equation*}
\begin{split}
B:= \left\{u\in C_T(D(A))\right.&\cap L^2_T(D(A^{1+s/2})): u|_{t = 0} = u_0,\;u-e^{-t\nu A^s}u_0 \in B_{M,T}^\beta,\\
&\;\left.\|u-e^{-t\nu A^s}u_0\|_{L^\infty_T(D(A))\cap L^2_T(D(A^{1+s/2}))}\leq M\right\}.
\end{split}
\end{equation*}
$B$ is nonempty (since $e^{-t\nu A^s}u_0\in B$) and closed in $C_T(D(A))\cap L^2_T(D(A^{1+s/2}))$.
By definition, it holds for all $u\in B$ that
\begin{equation}
\|u\|_{L^\infty_T(D(A))\cap L^2_T(D(A^{1+s/2}))}\leq C(\nu)M.
\label{eqn: estimate for u in B}
\end{equation}

Consider the map $Q:\,u\mapsto Qu := w$, where $u\in B$ and $w$ solves
\begin{equation*}
\partial_t w+\nu A^s w = f(u,u),\quad w|_{t= 0} = u_0.
\end{equation*}
%
%
Since $f(u,u)\in L^2_T(D(A^{1-s/2}))$ thanks to \eqref{eqn: norm of f used in the critical case}, the existence and uniqueness of $w\in L^\infty_T(D(A))\cap L^2_T(D(A^{1+s/2}))$ can be established easily, e.g., by Galerkin approximation.

We claim that $Q$ is well-defined from $B$ to itself if $M\ll 1$, with smallness of $M$ depending on $\alpha$, $\nu$, $\Omega$ and $\beta$.
Firstly, $\tilde{w} : = Qu- e^{-t\nu A^s}u_0 $ solves
\begin{equation*}
\partial_t \tilde{w}+\nu A^s \tilde{w} = f(u,u),\quad \tilde{w}|_{t = 0}  = 0.
\end{equation*}
In the view of energy estimate, \eqref{eqn: norm of f used in the critical case} and \eqref{eqn: estimate for u in B},
\begin{equation}
\|\tilde{w}\|_{L^\infty_T(D(A))\cap L^2_T(D(A^{1+s/2}))}\leq C(\nu)\|f(u,u)\|_{L^2_T D(A^{1-s/2})}\leq C_0M^2,
\label{eqn: estimate of the deviation from the semigroup homogeneous solution}
\end{equation}
where $C_0 = C_0(\alpha, \nu,\Omega)$.
If $M\leq C_0^{-1}$, $Qu$ satisfies
\begin{equation*}
\|Qu-e^{-t\nu A^s}u_0\|_{L^\infty_T(D(A))\cap L^2_T(D(A^{1+s/2}))}\leq M.
\end{equation*}
In addition, we may write
\begin{equation*}
Qu(t) = e^{-t\nu A^s}u_0 + \int_0^t e^{-(t-\tau)\nu A^s}f(u,u)(\tau)\,d\tau = e^{-t\nu A^s}u_0 + v[u,u](t).
\end{equation*}
Thanks to Lemma \ref{lemma: estimate of nonlinear term in the critical case}, for all $u\in B$, $Qu-e^{-t\nu A^s}u_0 = v[u,u]\in B_{C_1M^2,T}^\beta$, where $C_1 = C_1(\alpha, \nu, \Omega, \beta)$.
Now requiring $M$ to be even smaller if necessary, such that $M\leq C_1^{-1}$, we obtain $Qu-e^{-t\nu A^s}u_0 \in B_{M,T}^\beta$.
This proves the claim.

To this end, with the smallness assumption on $M$, we define $u^{(0)} = e^{-t\nu A^s}u_0\in B$, and $u^{(j)} = Qu^{(j-1)}\in B$ for all $j\in \mathbb{N}_+$.
We shall prove by induction that for all $j\in \mathbb{N}_+$,
\begin{align}
&\;\|u^{(j)}-u^{(j-1)}\|_{L^\infty_T (D(A))\cap L^2_T (D(A^{1+s/2}))}\leq (C_2 M)^{j-1}C_0M^2,\label{eqn: geometric series in the original space}\\
&\;u^{(j)}-u^{(j-1)}\in B_{(C_3M)^{j+1}},\label{eqn: geometric series in the new space}
\end{align}
for some $C_2 = C(\alpha, \nu,\Omega)$ and $C_3 = C_3(\alpha,\nu,\Omega,\beta)$, while $C_0$ is defined in \eqref{eqn: estimate of the deviation from the semigroup homogeneous solution}.
Indeed, consider the equation for $\tilde{w}_j = u^{(j)}-u^{(j-1)}$.
For all $j\geq 2$,
\begin{equation*}
\partial_t\tilde{w}_{j} +\nu A^s \tilde{w}_{j} = f(\tilde{w}_{j-1}, u^{(j-1)})+f(u^{(j-2)}, \tilde{w}_{j-1}),\quad  \tilde{w}_{j}|_{t=0} = 0.
\end{equation*}
Again, by energy estimate, \eqref{eqn: norm of f used in the critical case} and \eqref{eqn: estimate for u in B},
\begin{equation}
\begin{split}
&\;\|\tilde{w}_j\|_{L^\infty_T(D(A))\cap L^2_T(D(A^{1+s/2}))}\\
\leq &\;C(\nu)(\|f(\tilde{w}_{j-1}, u^{(j-1)})\|_{L^2_T D(A^{1-s/2})}+\|f(u^{(j-2)}, \tilde{w}_{j-1})\|_{L^2_T D(A^{1-s/2})})\\
\leq &\;C_2M\|\tilde{w}_{j-1}\|_{L^\infty_T (D(A))\cap L^2_T (D(A^{1+s/2}))},
\end{split}
\label{eqn: estimate of the difference between two solutions}
\end{equation}
Now \eqref{eqn: geometric series in the original space} follows immediately from \eqref{eqn: estimate of the deviation from the semigroup homogeneous solution} and \eqref{eqn: estimate of the difference between two solutions}.
To show \eqref{eqn: geometric series in the new space}, we note that $u^{(1)}-u^{(0)} = v[u^{(0)},u^{(0)}]\in B_{C_5(C_4M)^2,T}^\beta$, where $C_4$ and $C_5$ are the constants in Lemma \ref{lemma: semigroup solution is in that class} and Lemma \ref{lemma: estimate of nonlinear term in the critical case}, respectively.
Assuming $4C_5>1$, we have
\begin{equation*}
u^{(1)}-u^{(0)} = v[u^{(0)},u^{(0)}]\in B_{(2C_5C_4M)^2,T}^\beta =: B_{(C_3M)^2,T}^\beta.
\end{equation*}
Now suppose $u^{(j)}-u^{(j-1)}\in B_{(C_3M)^{j+1},T}^\beta$, by Lemma \ref{lemma: semigroup solution is in that class} and Lemma \ref{lemma: estimate of nonlinear term in the critical case},
\begin{equation*}
\begin{split}
&\;u^{(j+1)}-u^{(j)}\\
= &\;v[u^{(j)}-u^{(j-1)}, u^{(j)}]+v[u^{(j-1)}, u^{(j)}-u^{(j-1)}]\\
\in&\; B_{2C_5(C_3M)^{j+1}C_4M,T}^\beta = B_{(C_3M)^{j+2},T}^\beta.
\end{split}
\end{equation*}

In the view of \eqref{eqn: geometric series in the original space} and \eqref{eqn: geometric series in the new space}, if $M$ is assumed to be even smaller if necessary such that $C_2M, C_3M<1$, then $\{u^{(j)}\}_{j\in\mathbb{N}}$ converges in $L^\infty_T (D(A))\cap L^2_T (D(A^{1+s/2}))$ and $C^\beta_{loc}((0,T]; D(A^{1+s/2}))$ to $u_{**}$.
It is easy to show that $u_{**}$ is a fixed-point of $Q$, and thus a local solution of \eqref{eqn: equivalent formulation of LANS equation}.
By uniqueness result in Proposition \ref{prop: local existence and uniqueness}, such $u_{**}$ is unique and $u_{**}=u_*$.
Here $u_*$ is the unique global solution from Theorem \ref{thm: global existence and uniqueness}.
It satisfies the following estimates
\begin{align*}
&\;\|u_*-e^{-t\nu A^s}u_0\|_{L^\infty_T (D(A))\cap L^2_T (D(A^{1+s/2}))}\leq \sum_{j = 1}^\infty (C_2 M)^{j-1}C_1M^2 =: C_6 M^2,\\
&\;u_*-e^{-t\nu A^s}u_0\in B^\beta_{\sum_{j = 1}^\infty(C_3M)^{j+1}, T} =:B^\beta_{C_7M^2, T}.
\end{align*}
Assuming $M\leq 1$, we obtain the desired estimates by estimates on the homogeneous semigroup solution.

This completes the proof.
\end{proof}
\end{proposition}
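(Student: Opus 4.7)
The plan is to re-construct the local solution inside the refined class $B^\beta_{R,1}$ via a Picard iteration on the Duhamel formulation of \eqref{eqn: equivalent formulation of LANS equation},
\begin{equation*}
u(t) = e^{-t\nu A^s}u_0 + v[u,u](t),\qquad v[w_1,w_2](t) := \int_0^t e^{-(t-\tau)\nu A^s}f(w_1,w_2)(\tau)\,d\tau,
\end{equation*}
and then identify the resulting fixed point with the local solution $u$ from Proposition \ref{prop: local existence and uniqueness} via its uniqueness. The role of smallness of $\|u_0\|_{D(A)}$ is precisely to ensure the nonlinear Duhamel map is a contraction.

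Writing $M := \|u_0\|_{D(A)}$ and taking $T=1$, I would work on the closed subset
\begin{equation*}
\mathcal{B} := \bigl\{ u : u(0)=u_0,\ u - e^{-t\nu A^s}u_0 \in B^\beta_{M,1},\ \|u - e^{-t\nu A^s}u_0\|_{L^\infty_T D(A)\cap L^2_T D(A^{1+s/2})} \leq M \bigr\}
\end{equation*}
of $C_{[0,1]}(D(A))\cap L^2_{[0,1]}(D(A^{1+s/2}))$ and the map $Qu := e^{-t\nu A^s}u_0 + v[u,u]$. By Lemma \ref{lemma: semigroup solution is in that class}, $e^{-t\nu A^s}u_0 \in B^\beta_{C_0M,1}$, so every $u\in\mathcal{B}$ lies in $B^\beta_{(C_0+1)M,1}$. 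Lemma \ref{lemma: estimate of nonlinear term in the critical case} then yields $Qu - e^{-t\nu A^s}u_0 = v[u,u] \in B^\beta_{C_1 M^2, 1}$, while a standard energy estimate based on Lemma \ref{lemma: unified spatial estimate of f} bounds the same object in the ambient Bochner norm by $C_2 M^2$. Demanding $M \leq \min(C_1,C_2)^{-1}$ gives $Q\mathcal{B}\subset\mathcal{B}$. For contractivity, bilinearity gives $Qu - Q\tilde u = v[u-\tilde u,u] + v[\tilde u, u-\tilde u]$, and the same two estimates bound this by $C_3 M\,\|u - \tilde u\|$; shrinking $\varepsilon$ so that $C_3 M < 1$ makes $Q$ a strict contraction on $\mathcal{B}$. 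Banach's fixed-point theorem produces a unique $u_{**}\in\mathcal{B}$ with $u_{**}\in B^\beta_{(C_0+1)M,1}$. Since $u_{**}$ is a solution of \eqref{eqn: equivalent formulation of LANS equation} in $C_{[0,1]}(D(A))\cap L^2_{[0,1]}(D(A^{1+s/2}))$, the uniqueness in Proposition \ref{prop: local existence and uniqueness} identifies $u_{**}$ with $u$ on a common small interval, and continuation along the already-known existence of $u_{**}$ on $[0,1]$ gives $u=u_{**}$ throughout $[0,1]$.

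The main obstacle is the invariance step, specifically the two Hölder-in-time estimates \eqref{eqn: holder continuity of D(A) norm}--\eqref{eqn: holder continuity of D(A^1+s/2) norm} for $v[u,u]$, where the kernel $A^s e^{-(t-\tau)\nu A^s}$ is only conditionally integrable in $\tau$ near $\tau = t$. The standard remedy is to subtract $f(w_1,w_2)(t)$ inside the integral so that the time Hölder regularity of $f$ (inherited from $w_1,w_2\in B^\beta$) absorbs the singularity, to split the integration into near-$t$ and away-from-$t$ pieces, and to treat the regime $h>t/2$ by telescoping across a dyadic-like partition of $[t,t+h]$. Once packaged as Lemma \ref{lemma: estimate of nonlinear term in the critical case}, all subsequent steps of the proposition reduce to Banach fixed-point bookkeeping and invoking uniqueness.
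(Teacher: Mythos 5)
Your proposal is correct and follows essentially the same route as the paper: the same Duhamel map $Q$ on the same refined set, invariance via Lemma \ref{lemma: semigroup solution is in that class} and Lemma \ref{lemma: estimate of nonlinear term in the critical case}, smallness of $M=\|u_0\|_{D(A)}$ to close the iteration, and identification with the solution of Proposition \ref{prop: local existence and uniqueness} by uniqueness. The only cosmetic difference is that you invoke Banach's fixed-point theorem together with closedness of $\mathcal{B}$, while the paper runs the explicit Picard iteration and tracks each difference $u^{(j)}-u^{(j-1)}$ in $B^\beta_{(C_3M)^{j+1},T}$ so that the limit's refined-class bound comes from summing a geometric series.
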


Combining Proposition \ref{prop: local existence and uniqueness in critical case} with Theorem \ref{thm: global existence and uniqueness} yields the improved regularity of the global solution when $(n,s) = (2,1/2)$.

\begin{theorem}[Improved regularity of $u_*$ in the critical case]\label{thm: improved regularity in critical case}
Under the assumptions of Proposition \ref{prop: local existence and uniqueness in critical case}, the unique global solution $u_*\in C_{[0,+\infty)}(D(A))\cap L^2_{[0,+\infty),loc}(D(A^{1+s/2}))$ obtained in Theorem \ref{thm: global existence and uniqueness} satisfies 
\begin{enumerate}
\item $\|A^{s/2}u(t)\|_{D(A)}\leq C\|u_0\|_{D(A)}(1+t^{-1/2})$ for all $t\in (0,+\infty)$;
\item For all $0< t\leq t+h< +\infty$, $h\in[0,1]$,
\begin{align*}
\|u(t+h)-u(t)\|_{D(A)}\leq &\;Ch^\beta (1+t^{-\beta})\|u_0\|_{D(A)},\\
\|A^{s/2}(u(t+h)-u(t))\|_{D(A)}\leq &\;Ch^\beta (1+t^{-(\beta+1/2)})\|u_0\|_{D(A)},
\end{align*}
\end{enumerate}
where $C = C(\alpha,\nu,\Omega,\beta)$.
\end{theorem}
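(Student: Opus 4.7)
The plan is to bootstrap from Proposition~\ref{prop: local existence and uniqueness in critical case}, which supplies the analogous $B^\beta$-type estimates on the interval $[0,1]$, to the global-in-time statement on $(0,+\infty)$ by reapplying the Proposition from shifted initial times. The critical input is the global $D(A)$-bound $\|u_*(t)\|_{D(A)} \leq C_* \|u_0\|_{D(A)}$ from Theorem~\ref{thm: global existence and uniqueness}, where $C_*$ depends on $\alpha,s,n,\nu,\Omega,\|u_0\|_{D(A^{1/2})}$. Shrinking $\varepsilon$ once at the outset so that $C_* \varepsilon$ remains below the smallness threshold $\varepsilon'=\varepsilon'(\alpha,\nu,\Omega,\beta)$ of Proposition~\ref{prop: local existence and uniqueness in critical case}, we may treat $u_*(t_0)$ as admissible small initial data for the Proposition at \emph{every} instant $t_0\geq 0$.

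First, I apply Proposition~\ref{prop: local existence and uniqueness in critical case} on $[0,1]$ to place $u_*\in B^\beta_{C\|u_0\|_{D(A)},1}$. Then, for each $t_0>0$, I apply the Proposition to the time-shifted solution $v(\tau):=u_*(t_0+\tau)$, which solves \eqref{eqn: equivalent formulation of LANS equation} on $[0,1]$ with initial data $u_*(t_0)$; this yields the full $B^\beta$-class bounds for $u_*$ on each interval $[t_0,t_0+1]$, with a constant that is uniform in $t_0$ thanks to the global bound.

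To extract $\|A^{s/2}u_*(t)\|_{D(A)}\leq C\|u_0\|_{D(A)}(1+t^{-1/2})$, I split at $t=1$: for $t\in(0,1]$ I use the first application directly; for $t\geq 1$, I take $t_0=t-1/2$ so that $u_*(t)=v(1/2)$ and the $B^\beta$-class gives $\|A^{s/2}u_*(t)\|_{D(A)}\leq C\|u_0\|_{D(A)}(1/2)^{-1/2}$, bounded. For the H\"older estimates I consider three cases: (a) $t+h\leq 1$, which is immediate from the first application; (b) $t\geq 1$, where for $h\leq 1/2$ I take $t_0=t-1/2$ and apply \eqref{eqn: holder continuity of D(A) norm}--\eqref{eqn: holder continuity of D(A^1+s/2) norm} to $v$ at the interior point $\tau=1/2$, and for $h\in(1/2,1]$ I split the increment at $t+h/2$ to reduce to the small-$h$ subcase; and (c) $t<1<t+h$, handled by splitting at $\tau=1$ and combining (a) with (b).

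The main obstacle is bookkeeping: one must verify from the proof of Proposition~\ref{prop: local existence and uniqueness in critical case} that its smallness threshold $\varepsilon'$ depends only on $\alpha,\nu,\Omega,\beta$ (so the choice of $\varepsilon$ can be made once and for all), confirm that the constant $C_*$ remains bounded since it is evaluated at $\|u_0\|_{D(A^{1/2})}\leq \varepsilon$, and check that the singular factors $(t-t_0)^{-\gamma}$ arising in the shifted $B^\beta$-class combine, after the case split, into the clean envelopes $(1+t^{-1/2})$, $(1+t^{-\beta})$, and $(1+t^{-(\beta+1/2)})$ claimed in the theorem. These are tedious but routine.
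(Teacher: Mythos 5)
Your proposal is correct and follows exactly the route the paper intends: the paper's entire proof is the one-line remark that Theorem \ref{thm: improved regularity in critical case} follows by combining Proposition \ref{prop: local existence and uniqueness in critical case} with the global $D(A)$-bound of Theorem \ref{thm: global existence and uniqueness}, i.e., by restarting the solution at shifted initial times $t_0$ (licit by uniqueness) and using the uniform bound $\|u_*(t_0)\|_{D(A)}\leq C_*\|u_0\|_{D(A)}$ to keep the data within the smallness threshold. Your explicit case analysis and the observation that $\varepsilon$ must be shrunk once so that $C_*\varepsilon$ stays admissible are exactly the bookkeeping the paper leaves implicit.
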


\section*{Acknowledgments}
Zaihui Gan is partially supported by the National Science Foundation of China under grants
11571254 and the Program for New Century Excellent Talents in University (NCET-12-1058).
Fanghua Lin and Jiajun Tong are partially supported by National Science Foundation under
Award Number DMS-1501000. The research was initiated while the first author was visiting
the Courant Institute in the Fall of 2015.



\providecommand{\href}[2]{#2}
\providecommand{\arxiv}[1]{\href{http://arxiv.org/abs/#1}{arXiv:#1}}
\providecommand{\url}[1]{\texttt{#1}}
\providecommand{\urlprefix}{URL }

\medskip
\medskip

\end{document}